 \def\Xint#1{\mathchoice
 {\XXint\displaystyle\textstyle{#1}}%
 {\XXint\textstyle\scriptstyle{#1}}%
 {\XXint\scriptstyle\scriptscriptstyle{#1}}%
 {\XXint\scriptscriptstyle\scriptscriptstyle{#1}}%
 \!\int}
 \def\XXint#1#2#3{{\setbox0=\hbox{$#1{#2#3}{\int}$}
 \vcenter{\hbox{$#2#3$}}\kern-.5\wd0}}
 \def\dashint{\Xint-}
\def\N{\mathbb{N}}
\def\diagint{{\raise-.1pt\hbox{--}\hskip-7.9pt\intop}}
\newtheorem{theorem}{Theorem}[section]
\newtheorem{lemma}[theorem]{Lemma}
\newtheorem{corollary}[theorem]{Corollary}
\newtheorem{rem}[theorem]{Remark}
\newtheorem{definition}[theorem]{Definition}
\newtheorem{assumption}[theorem]{Assumption}
\numberwithin{equation}{section}
\newcommand{\hL}{\mathring L}
\newcommand{\Rn}{{\setR^n}}
\newcommand{\RN}{{\setR^n}}
\newcommand{\RNn}{\setR^{n\times n}}
\newcommand{\D}{\varepsilon (v) }
\newcommand{\Dw}{\varepsilon (w) }
\newcommand{\divv}{\divergence}
\newcommand{\Bog}{\ensuremath{\text{\rm Bog}}}
\newcommand{\dx}{\ensuremath{\,{\rm d} x}}
\newcommand{\dy}{\ensuremath{\,{\rm d} y}}
\newcommand{\Id}{\ensuremath{\text{Id}}}
\newcommand{\id}{\mathbf{1}}
\newcommand{\tr}{\text{ tr}}
\newcommand{\halfR}{{\setR}^n_+}
\newcommand{\Scal}{{\mathcal{S}}}
\begin{document}

\title[A unified theory for some non Newtonian fluids under singular forcing]{
A unified theory for some non Newtonian fluids under singular forcing}
\begin{abstract}
We consider a model of steady, incompressible non-Newtonian flow with neglected convective term under external forcing. Our structural assumptions allow for certain non-degenerate power-law or Carreau-type fluids. We provide the full-range theory, namely existence, optimal regularity and uniqueness of solutions, not only with respect to forcing belonging to Lebesgue spaces, but also with respect to their refinements, namely the weighted Lebesgue spaces, with weights in a respective Muckenhoupt class. The analytical highlight is derivation of existence and uniqueness theory for forcing with its regularity well-below the natural duality exponent, via estimates in weighted spaces. It is a generalization of \cite{BulDieSch15} to incompressible fluids. Moreover, two technical results, needed for our analysis, may be useful for further studies. They are: the solenoidal, weighted, biting div-curl lemma and the solenoidal Lipschitz approximations on domains.
\end{abstract}


\author[M.~Bul\'{\i}\v{c}ek]{M. Bul\'{\i}\v{c}ek} 
\address{Mathematical Institute, Faculty of Mathematics and Physics, Charles University in Prague
Sokolovsk\'{a} 83, 186 75 Prague, Czech Republic}
\email{mbul8060@karlin.mff.cuni.cz}

\author{J. Burczak }
\address{Institute of Mathematics, Polish Academy of Sciences, ul. \'Sniadeckich 8, 00-656 Warsaw, Poland}
\email{jb@impan.pl}

\author[S.~Schwarzacher]{S. Schwarzacher}
\address{Department of Mathematical Analysis, Faculty of Mathematics and Physics,  Charles University in Prague,
Sokolovsk\'{a} 83, 186 75 Prague, Czech Republic}
\email{schwarz@karlin.mff.cuni.cz}

\thanks{M.~Bul\'{\i}\v{c}ek's work is supported by the project LL1202  financed by the Ministry of Education, Youth and Sports, Czech Republic. M.~Bul\'{\i}\v{c}ek is a member of the Ne\v{c}as Center for Mathematical Modeling. S.~Schwarzacher thanks the program PRVOUK~P47, financed by Charles University in Prague.}

\maketitle
\section{Introduction}
On a bounded domain $\Omega \subset \Rn$ with a $C^{1}$ boundary, we consider the following stationary nonlinear Stokes system
\begin{align}
\label{eq:sysA}
\begin{aligned}
  -\divv\Scal(x, \D) +\nabla p &= -\divergence
  f &&\text{ in }\Omega\\
  \divergence u&=0&&\text{ in }\Omega\\
  u&=0 &&\text{ on }\partial \Omega,
  \end{aligned}
\end{align}
where
$v :\, \Omega \to \RN$ describes the unknown velocity of the fluid, $p  \,:\, \Omega \to \setR$ describes the unknown pressure, whereas $f  \,:\, \Omega \to  \RNn$ is the given forcing and the nonlinear stress tensor is a prescribed, matrix-valued mapping $\Scal \,:\, \Omega \times \RNn \to \RNn$. We use the notation $\D =\frac{1}{2} (\nabla v + \nabla^T v)$.

We will introduce a setting, which allows us for $f\in L^q(\Omega)$ with $q\in (1,\infty)$ to provide the full-range theory related to \eqref{eq:sysA}, namely: existence, regularity and uniqueness of its solutions (hence the eponymous `unified theory') in an arbitrary space dimension. 
Succinctly, it will suffice that $\Scal$ is monotone, linear-at-infinity (\emph{i.e.}, uniformly in $x$, $\Scal (x, \eta) \to \mu\, \eta$ as $\eta \to \infty$) and $\Scal (x, \eta) \cdot \eta$ has quadratic growth. 
Typically, 
the precise restrictions related to uniqueness will be actually slightly stronger. For detailed assumptions, we refer to Section \ref{ssec:res}.

Observe that for $\Scal (x, \eta) \cdot \eta$ growing quadratically, in case $f\not\in L^2(\Omega)$, the operator $f\mapsto \D$ related to \eqref{eq:sysA} is not anymore coupled via duality. In simpler words, $u$ can not be expected to remain an admissible test function. Therefore, the standard monotone operator theory fails. This is the analytic reason for calling such $f$'s rough forcing and the related solutions -- very weak solutions. Providing the `unified theory' for such rough forcing is the focal point of our article.

Within our assumptions, system \eqref{eq:sysA} models a steady flow of certain incompressible non-Newtonian fluids with neglected inertial forces (no convective term), that behave asymptotically Newtonian for large shear rates.
This includes famous models of incompressible non-Newtonian fluids, such as (non-degenerate) power-law fluids as well as Carreau-type fluids. For instance, we allow for  
$\Scal (x, \eta) = s (x,\abs{\eta}) \eta$ with
\begin{equation}\label{eq:ex}
\begin{aligned}
s (x,\abs{\eta})&=\mu+(\nu_0+\nu_1\abs{\eta}^2)^\frac{p-2}{2} &&\textrm{for }p\in (1,2]\text{ and } \mu>0,\, \nu_0,\nu_1\geq 0,\\
s (x,\abs{\eta})&=\min\set{\mu,(\nu_0+\nu_1\abs{\eta}^2)^\frac{p-2}{2}} &&\textrm{for }p\in (2,\infty]\text{ and } \mu>0,\, \nu_0,\nu_1\geq 0\\
\end{aligned}
\end{equation}
An important example among the substances described via stresses as above is blood, paint or ketchup.  For a discussion of the physical model see~M{\'a}lek, Rajagopal \& R{\r u}{\v z}i{\v c}ka \cite{MalR05} and M{\'a}lek \& Rajagopal \cite{MalRaj95}.
 The analysis for such fluids was initiated by Ladyzhenskaya~\cite{Lad67,Lad69} and Lions~\cite{Lio69}.

In case of partial differential systems inspired by non-Newtonian flows, as our \eqref{eq:sysA}, there is no general local smoothness result of the homogenous problem, due to lack of an Uhlenbeck-type structure. This distinguishes the non-Newtonian models from, unless similar, nonlinear partial differential systems with a $p$-Laplace structure. Consequently, the nonlinear Calder\'on-Zygmund theory for non-Newtonian flows is generally not provided for $f \in L^q(\Omega)$ with large $q$'s . Therefore, the regularity theory for \eqref{eq:sysA} with high-integrable forcings is also interesting for us.

\subsection{Context and main novelties}

Firstly, let us recall the case of the classical steady Stokes system with a rough forcing
\begin{align}
\label{eq:syslinW}
\begin{aligned}
  -\Delta v +\nabla p &= \divv f &&\text{ in }\Omega\\
  \divergence u&=0&&\text{ in }\Omega\\
  u&=0 &&\text{ on }\partial \Omega.
  \end{aligned}
\end{align}
The existence of a solution $(v,p)$ to \eqref{eq:syslinW}, as well as its uniqueness and  optimal regularity
\begin{equation}\label{eq:linG}
f\in L^q(\Omega) \implies \nabla v \in L^q (\Omega), \; p 
 \in \hL^q (\Omega),
\end{equation}
for $q \in (1, \infty)$ is classical. (The circle above $L^q$ denotes null mean values and disambiguates the pressure.) The first such result is due to Cattabriga \cite{Cat61}, where the case of three space dimensions and smooth, bounded domain is considered. For further results (all space dimensions and more general domains), we refer to Borchers \& Miyakawa \cite[Section 3]{BorMiy90} and \cite{BorMiy92notes} as well as Solonnikov \cite{sol93} with their references. 

Equations and systems with a more complex structure do not allow to build such a unified theory as \eqref{eq:linG} with $q \in (1, \infty)$. Recall that even a linear, elliptic, homogenous equation can have a non-smooth solution $v$ such that $\nabla v \notin L^2$, as long as its bounded coefficients are non-smooth, see Serrin~\cite{Ser64}. This, compared with the fact that a linear, homogenous equation with bounded coefficients admits a smooth solution $v$ as long as $\nabla v \in L^2$, indicates that the case of $\nabla v \in L^q$, $q<2$ is peculiarly interesting. 

If the studied problem becomes nonlinear and vectorial, even smooth coefficients and smooth forcing do not assure existence of  smooth solutions, recall  \v{S}ver\'{a}k \& Yan~\cite{SvYa02} with its references. In fact, the existence or regularity theory is available only for special cases, where the nonlinearity has an appropriate structure. Its canonical examples are: monotonicity for the existence theory and the Uhlenbeck structure for regularity. It is important to observe that, up to now, both of them are insufficient to obtain existence (all the more - optimal regularity, even if the notion of optimality is clear) of solutions to problems with rough forcing, \emph{i.e.} of the type $\divergence f$ with integrability of $f$ substantially below the duality exponent dictated through the energy estimate. 

In this paper we develop, under suitable assumptions on the nonlinear shear stress $\Scal$, the {\em unified theory} for \eqref{eq:sysA}, namely
\begin{itemize}
\item[(i)]  Existence of its solutions, for forcing within the entire integrability range $q\in (1,\infty)$, including the difficult case of $q$'s below the duality exponent (equal $2$ within our structure).
\item[(ii)] Optimal regularity estimates and uniqueness of solutions.
\end{itemize}
The existence part and its methodology is the main novelty here. Our results generalize the ones of Bul\'i\v{c}ek, Diening \& Schwarzacher \cite{BulDieSch15} to incompressible steady flows with no inertial forces.

We find at least two of our technical results, needed to accomplish the main goal, to be interesting by themselves. These are the solenoidal, weighted, biting div-curl lemma, potentially useful for identification of limits of nonlinearities appearing in mathematical fluid dynamics, as well as our version of the solenoidal Lipschitz approximation lemma.


\subsection{Main result.}\label{ssec:res}
For a tensor $Q\in \RNn$, its symmetrisation is denoted by $Q^s = \frac{Q + Q^T}{2}$.
We provide existence of a solution to \eqref{eq:sysA}, with $f \in L^q (\Omega)$ for all $q \in (1, \infty)$, with the related optimal regularity estimate, under the following assumptions.
\begin{assumption}
\label{ass:A}
Let $\Scal (\cdot, \cdot): \Omega \times \RNn \to \Rn$ be a Carath\'{e}odory mapping such that for positive numbers $c_0, c_1, c_2, \mu$ holds
\[
c_0\abs{Q^s}^2- c_2\leq \Scal(x,Q^s)\cdot Q, \qquad \abs{\Scal(x,Q^s)}\leq c_1\abs{Q} +c_2,\]
\[
0\leq(\Scal(x,Q^s)-\Scal(x,P^s))\cdot(Q-P),
\]
as well as it is \emph{linear-at-infinity}, \emph{i.e.}
\begin{align}
\label{eq:ass1}
\lim_{\abs{Q^s}\to\infty} \frac{\abs{\Scal(x,Q^s)- {\mu} Q^s}}{\abs{Q^s}}=0
\end{align}
for all $Q,P\in \RNn$ and uniformly in $x$.
\end{assumption}
The obtained solution is unique among distributional solutions, in case one additionally has
\begin{assumption}
\label{ass:B}
Tensor $\Scal$ verifies
\[
0 < (\Scal(x,Q^s)-\Scal(x,P^s))\cdot(Q-P),
\]
and 
\begin{align}
\label{eq:ass2}
\lim_{\abs{Q^s}\to\infty}  \left|\frac{\partial \Scal (x,Q^s)}{\partial Q^s}- {\mu \Id} \, \right| = 0
  \end{align}
for all $Q^s \neq P^s \in \RNn$ and uniformly in $x$.
\end{assumption}
\begin{rem}[Admissible stress tensors]
The canonical stress tensors admissible by Assumption \ref{ass:A} are the following ones
\begin{equation}\label{prominent}
\Scal(x,\eta)=s(x,\abs{\eta})\eta, \quad \textrm{with } \; 0 \le s(x,\lambda) \le C, \quad \lim_{\lambda\to\infty}s(x,\lambda)=\mu,
\end{equation}
as long as they are monotonous.  Both Assumption \ref{ass:A} and Assumption \ref{ass:B} are satisfied by the introductory example \eqref{eq:ex}.
\end{rem}

We are ready to state our main results. The definitions of notions used in their formulations  (most of them standard) can be found in Section \ref{ssec:defs}.
\begin{theorem}\label{th}
Let $\Scal$ of   \eqref{eq:sysA} satisfy Assumption~\ref{ass:A}. 
If $f\in L^q (\Omega)$ with $1<q<\infty$, then  \eqref{eq:sysA} admits a weak solution $(v, \pi) \in W_0^{1,q} (\Omega) \times \hL^{q} (\Omega)$. 

Moreover, for {any} $(v,\pi) \in W_0^{1,s} (\Omega)\times {  \hL^s } (\Omega)$ with a $ s>1$, solving  \eqref{eq:sysA}, the following estimate holds
\begin{equation}\label{eq:opt}
\norm{\nabla v}_{L^{q} (\Omega)} +\norm{\pi 
}_{L^{q}(\Omega)}  \le C \left(1+ \norm{f}_{L^{q} (\Omega)}  \right).
\end{equation}
The constant depends on $q$, the $C^1$-property of $\Omega$ and the quantities in Assumption~\ref{ass:A}.

If Assumption~\ref{ass:B} is additionally fulfilled, then $(v,\pi)$ solving  \eqref{eq:sysA} is unique in $W_0^{1,q} (\Omega)\times { \hL^q }(\Omega)$.
\end{theorem}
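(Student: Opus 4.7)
The guiding idea is to exploit the linear-at-infinity structure by writing $\Scal(x,Q^s) = \mu Q^s + \mathcal{G}(x,Q^s)$, where, by \eqref{eq:ass1}, the remainder $\mathcal{G}$ is controlled by $|\mathcal{G}(x,Q^s)|\le C(1+|Q^s|)$ and, crucially, $|\mathcal{G}(x,Q^s)|/|Q^s|\to 0$ as $|Q^s|\to\infty$. System \eqref{eq:sysA} can then be recast as
\begin{equation*}
 -\mu\Delta v + \nabla \pi = -\divv\bigl(f + \mathcal{G}(\cdot,\varepsilon(v))\bigr),
\end{equation*}
\emph{i.e.}\ a sublinear perturbation of the linear Stokes system \eqref{eq:syslinW}. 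The whole proof is organised around this reformulation: the classical Cattabriga-type $L^q$ (and weighted $L^r(\omega)$) theory is transported to the effective right-hand side $f+\mathcal{G}(\cdot,\varepsilon(v))$, and the perturbation is absorbed using its sublinearity.

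First I would prove the a priori bound \eqref{eq:opt} for any distributional solution $(v,\pi)\in W_0^{1,s}\times\hL^{s}$. Applied to the effective right-hand side, Cattabriga's linear estimate gives
\begin{equation*}
 \|\nabla v\|_{L^s(\Omega)}+\|\pi\|_{L^s(\Omega)}\le C\bigl(1+\|f\|_{L^s(\Omega)}+\|\mathcal{G}(\cdot,\varepsilon(v))\|_{L^s(\Omega)}\bigr),
\end{equation*}
and the sublinearity of $\mathcal{G}$ together with an absorption argument yields the bound at the initial exponent $s$. To upgrade $s$ to the target $q$, I would iterate on weighted Calder\'on--Zygmund estimates for linear Stokes with Muckenhoupt $A_r$-weights built from $M|\varepsilon(v)|$ (in the spirit of the precursor \cite{BulDieSch15}); the sublinearity again allows absorption of a small multiple of $\|\nabla v\|$ on the left, shifting the integrability step by step until $q$ is reached. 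Ne\v{c}as' negative-norm inequality combined with the Bogovskii operator supplies the matching bound for the pressure.

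Existence is then obtained by approximation: mollify $f$ to $f_k\in L^2$, solve the regularised system in $W^{1,2}_{0,\divv}(\Omega)$ by the Minty--Browder monotone operator method (coercivity from $c_0|Q^s|^2\le\Scal\cdot Q+c_2$), and recover $\pi_k$ by de~Rham together with the Bogovskii operator. The a priori estimate applied uniformly in $k$ yields $v_k\rightharpoonup v$ in $W^{1,q}_0(\Omega)$ and $\pi_k\rightharpoonup\pi$ in $\hL^q(\Omega)$. The crucial step is identifying the weak limit of the nonlinearity $\Scal(\cdot,\varepsilon(v_k))$, and it is here that the announced solenoidal weighted biting div-curl lemma enters: combined with a Chacon-type biting truncation and Minty's trick adapted to divergence-free testing, it forces $\overline{\Scal(\cdot,\varepsilon(v_k))}=\Scal(\cdot,\varepsilon(v))$ almost everywhere on an exhausting sequence of subsets, which suffices to close the weak formulation.

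For uniqueness under Assumption~\ref{ass:B}, given two solutions $(v_i,\pi_i)\in W^{1,q}_0\times\hL^q$, the difference $w=v_1-v_2$ satisfies
\begin{equation*}
 \int_\Omega \bigl(\Scal(\cdot,\varepsilon(v_1))-\Scal(\cdot,\varepsilon(v_2))\bigr)\cdot\varepsilon(\varphi)\,dx=0\quad\text{for all }\varphi\in C^\infty_{c,\divv}(\Omega).
\end{equation*}
The obstruction is that $w$ itself is \emph{not} admissible when $q<2$. I would therefore invoke the solenoidal Lipschitz approximation on $\Omega$ (the second announced technical tool), producing $w_\lambda\in W^{1,\infty}_{0,\divv}(\Omega)$ that coincides with $w$ on the sublevel set $\{M|\nabla w|\le\lambda\}$. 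Testing with $w_\lambda$ and decomposing the resulting integral, on the coincidence set the strict monotonicity of Assumption~\ref{ass:B} gives a nonnegative term, while on its complement the derivative condition \eqref{eq:ass2} shows that the integrand behaves essentially as in the linear Stokes equation, hence is controlled by Lipschitz and measure bounds that vanish as $\lambda\to\infty$. Passing to the limit forces $\varepsilon(v_1)=\varepsilon(v_2)$, and Korn's inequality with zero boundary values gives $v_1=v_2$. The principal obstacle throughout the scheme is precisely the identification of the nonlinear weak limit in the sub-dual range, and it is there that both novel technical tools are genuinely needed.
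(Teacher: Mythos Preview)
Your scheme for the a~priori estimate and for existence is essentially the paper's: rewrite \eqref{eq:sysA} as linear Stokes with effective forcing $f+\mathcal{G}(\cdot,\varepsilon(v))$, use weighted Calder\'on--Zygmund estimates (Lemma~\ref{lem:CZ}), absorb the sublinear remainder via \eqref{eq:ass1}, approximate $f$ by $f^k\in L^2$, and identify the nonlinear limit with Theorem~\ref{T5} plus Minty. One refinement worth noting: the paper does not ``iterate step by step'' to climb from $s$ to $q$. Instead it proves the $L^2_\omega$ estimate for \emph{all} $\omega\in A_2$ at once (approximating the target weight from below by $\omega_j=\min\{j\tilde\omega_3,\omega\}$ so that $\nabla v\in L^2_{\omega_j}$, absorbing, then using monotone convergence as $j\to\infty$) and then invokes the Rubio~de~Francia extrapolation to reach every $L^q_\omega$. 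This is cleaner than an iteration and gives the constant with the correct dependence.

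Your uniqueness argument, however, has a genuine gap. Testing \eqref{un1} with the solenoidal Lipschitz truncation $w_\lambda$ produces on the bad set $\{M|\nabla w|>\lambda\}$ an integral of the form
\[
\int_{\{M|\nabla w|>\lambda\}}\bigl(\Scal(\cdot,\varepsilon(v_1))-\Scal(\cdot,\varepsilon(v_2))\bigr)\cdot\nabla w_\lambda,
\]
and with only $\nabla v_i\in L^q$, $q<2$, the available bounds give control of order $\lambda\cdot|\{M|\nabla w|>\lambda\}|^{1/q'}\sim\lambda^{2-q}$, which \emph{diverges}. The derivative condition \eqref{eq:ass2} alone does not make this vanish; it merely says the integrand is close to $\mu\,\varepsilon(w)\cdot\nabla w_\lambda$, which has the same bad scaling. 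The paper avoids this entirely: it first \emph{bootstraps} $\varepsilon(v_1-v_2)$ into $L^2(\Omega)$ by rewriting the difference equation as linear Stokes with right-hand side $\mu\,\varepsilon(w)-(\Scal(\cdot,\varepsilon(v_1))-\Scal(\cdot,\varepsilon(v_2)))$, applying the algebraic Lemma~\ref{L:algebra} (a quantitative consequence of \eqref{eq:ass2}) to bound this by $\delta|\varepsilon(w)|+C(\delta)$, then using the weighted $L^2_{\omega^j}$ estimate of Lemma~\ref{lem:CZ} with $\omega^j=\min\{1,j\omega_0\}$ and absorbing. After monotone convergence one has $v_1-v_2\in W^{1,2}_{0,\divv}(\Omega)$, so it is now an admissible test function and strict monotonicity concludes directly. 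The Lipschitz truncation is not used for uniqueness at all.
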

Theorem \ref{th} as stated is complete, since it gives at once existence, optimal integrability and uniqueness. However, for $q <2$, the $L^q$-a-priori information is not enough to develop an existence theory. To this end, we need to derive more accurate estimates, namely in weighted Lebesgue spaces. An additional benefit of this technique is that one immediately obtains the following generalisation of Theorem \ref{th} over the weighted Lebesgue spaces with the Muckenhoupt weight $A_q$.
\begin{theorem}\label{th1}
Let $\Scal$ of   \eqref{eq:sysA} satisfy Assumption~\ref{ass:A}. 

If $f\in L^q_\omega (\Omega)$ with $1<q<\infty$, $\omega\in A_q$, then \eqref{eq:sysA} admits a solution $(v, \pi) \in W_{0,\omega}^{1,q} (\Omega) \times {\hL_{\omega}^{q} } (\Omega)$. 

Moreover, for any {$(v,\pi) \in W_{0, \tilde \omega}^{1,s} (\Omega)\times \hL^s_{\tilde \omega} (\Omega)$} solving  \eqref{eq:sysA}, with an $s>1$ and  $\tilde \omega\in A_s$, the following estimate holds
\begin{equation}\label{eq:th1}
\norm{\nabla v}_{L^{q}_\omega (\Omega)} +\norm{\pi 
}_{L^{q}_\omega(\Omega)}  \le C \left(1+ \norm{f}_{L^{q}_\omega (\Omega)}  \right).
\end{equation}
The constant depends on $q, A_q,$ the $C^1$-property of $\Omega$ and the quantities in Assumption~\ref{ass:A}.

If Assumption~\ref{ass:B} is additionally fulfilled, then $(v,\pi)$ solving  \eqref{eq:sysA} is unique in $W_{0, \omega}^{1,q} (\Omega)\times \hL^q_{ \omega} (\Omega)$.
\end{theorem}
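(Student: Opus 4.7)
The heart of the argument is the weighted a priori estimate \eqref{eq:th1}; once this is established, existence follows by approximation and uniqueness by a solenoidal Lipschitz-truncation test. The guiding idea is that the linear-at-infinity hypothesis \eqref{eq:ass1} makes \eqref{eq:sysA} a perturbation of the linear Stokes system with a remainder that is sub-linear in $\varepsilon(v)$ and hence absorbable.

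\textbf{A priori estimate.} I would rewrite the first equation of \eqref{eq:sysA} as the Stokes system
\[
-\mu\Delta v + \nabla p = \divv\bigl(\mu\varepsilon(v) - \Scal(x,\varepsilon(v)) - f\bigr),\qquad \divv v = 0,
\]
and apply the weighted Calder\'on--Zygmund theory for Stokes with Dirichlet data on a $C^1$ domain (valid for $\omega\in A_q$, $q\in(1,\infty)$, e.g.\ by Rubio de Francia extrapolation from the $L^2$ case) to get
\[
\|\nabla v\|_{L^q_\omega}+\|p\|_{L^q_\omega}\le C(q,A_q)\bigl(\|\mu\varepsilon(v)-\Scal(x,\varepsilon(v))\|_{L^q_\omega}+\|f\|_{L^q_\omega}\bigr).
\]
Property \eqref{eq:ass1} together with the linear growth of $\Scal$ yields, for every $\epsilon>0$, a constant $C(\epsilon)$ such that $|\Scal(x,Q^s)-\mu Q^s|\le \epsilon|Q^s|+C(\epsilon)$ pointwise. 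Inserting this bound, choosing $\epsilon$ so small that $C(q,A_q)\epsilon<\tfrac12$, and using $\|\varepsilon(v)\|_{L^q_\omega}\le\|\nabla v\|_{L^q_\omega}$ to absorb the remainder on the left produces \eqref{eq:th1}.

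\textbf{Existence.} Approximate $f$ by smooth $f_k\to f$ in $L^q_\omega(\Omega)$ that additionally lie in $L^2(\Omega)$. For each $f_k$, classical Browder--Minty theory (applicable because $\Scal$ is monotone, coercive and of linear growth, hence fits the $L^2$-framework) produces a solution $(v_k,p_k)\in W^{1,2}_0(\Omega)\times \hL^2(\Omega)$. The a priori estimate applied with the weight $\omega$ delivers uniform $W^{1,q}_{0,\omega}\times \hL^q_\omega$-bounds. Extract weakly convergent subsequences $v_k\rightharpoonup v$, $p_k\rightharpoonup p$. The nontrivial step is identifying the weak limit of $\Scal(x,\varepsilon(v_k))$ with $\Scal(x,\varepsilon(v))$; here I invoke the solenoidal, weighted, biting div-curl lemma announced in the abstract together with the Minty monotonicity trick built on Assumption~\ref{ass:A}, which suffice to pass to the limit in the distributional formulation.

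\textbf{Uniqueness and main obstacle.} If $(v_1,p_1)$ and $(v_2,p_2)$ both solve \eqref{eq:sysA} in $W^{1,q}_{0,\omega}\times \hL^q_\omega$, then $w:=v_1-v_2$ is solenoidal and satisfies the homogeneous difference equation. For $q<2$ one cannot test directly with $w$; instead I would apply the solenoidal Lipschitz approximation proved in the paper to obtain $w_\lambda\in W^{1,\infty}_0(\Omega)$, divergence-free and equal to $w$ off a bad set whose measure is controlled by $\lambda$. Testing with $w_\lambda$ and writing
\[
\Scal(x,\varepsilon(v_1))-\Scal(x,\varepsilon(v_2))=\int_0^1\tfrac{\partial\Scal}{\partial Q^s}\bigl(x,\varepsilon(v_2)+t\,\varepsilon(w)\bigr)\,dt\,\cdot\varepsilon(w),
\]
the integrand is strictly positive on bounded arguments (by the strict monotonicity in Assumption~\ref{ass:B}) and close to $\mu\,\mathbf{1}$ on large ones (by \eqref{eq:ass2}); letting $\lambda\to\infty$ forces $\varepsilon(w)=0$, hence $w=0$ by Korn's inequality. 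I expect the principal obstacle to be the limit identification for the nonlinearity: the weighted/biting compactness is not the familiar $L^p$ div-curl setting, and the solenoidal structure must be preserved throughout so that no spurious pressure contribution spoils the weak convergence; the companion difficulty in the uniqueness argument is matching the $L^\infty$-world of the Lipschitz truncation with the two regimes of Assumption~\ref{ass:B}.
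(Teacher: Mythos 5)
Your overall architecture is right, but two of the three steps have genuine gaps. In the a priori estimate, the absorption is circular as written: the weighted Stokes estimate (Lemma~\ref{lem:CZ} in the paper) is an \emph{a priori} bound that presupposes $(v,\pi)\in W^{1,q}_\omega\times L^q_\omega$, whereas the theorem only assumes $(v,\pi)\in W^{1,s}_{0,\tilde\omega}\times\hL^s_{\tilde\omega}$ for some possibly unrelated $s$ and $\tilde\omega$. You absorb $\epsilon\norm{\nabla v}_{L^q_\omega}$ into the left-hand side without knowing that this quantity is finite. The paper repairs this by first reducing to $q=2$ via extrapolation and then introducing intermediate weights $\omega_j=\min\{j\tilde\omega_3,\omega\}$, where $\tilde\omega_3$ is built from $(M\nabla v)^{s-2}$ and $(M\pi)^{s-2}$, so that $\nabla v,\pi\in L^2_{\omega_j}$ for each $j$ with $A_2(\omega_j)$ bounded uniformly; the absorption is legitimate for each $\omega_j$, and monotone convergence as $j\to\infty$ then yields the estimate for $\omega$. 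A related omission occurs in the existence step: the biting div--curl lemma needs $a^k$ and $s^k$ in \emph{dual} weighted spaces, and for $q\neq 2$ the bound in $L^q_\omega$ alone does not provide this. The paper's key device is the auxiliary weight $\omega_0=(1+Mf)^{s_0-2}\in A_2$, for which both $\nabla v^k$ and $\Scal(\cdot,\varepsilon(v^k))$ are uniformly bounded in $L^2_{\omega_0}$; the div--curl lemma and the Minty trick are run entirely in this $L^2_{\omega_0}$ duality, not in $L^q_\omega$.

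Your uniqueness argument also does not close in the interesting regime. Testing the difference equation with the solenoidal Lipschitz truncation $w_\lambda$ of $w=v_1-v_2$ leaves a bad-set contribution of size roughly $\lambda\int_{\{M(\nabla w)>\lambda\}}(1+\abs{\nabla v_1}+\abs{\nabla v_2})\dx$. With only $\nabla v_i\in L^{s_0}(\Omega)$, $s_0<2$, this is of order $\lambda^{2-s_0}$ and diverges as $\lambda\to\infty$, so you cannot conclude that the good-set integral vanishes. (Your mean-value representation also presumes global differentiability of $\Scal$, which Assumption~\ref{ass:B} only invokes asymptotically.) The paper takes a different route: using Lemma~\ref{L:algebra} (the consequence $\abs{\Scal(x,Q)-\Scal(x,P)-\mu(Q-P)}\le\delta\abs{Q-P}+C(\delta)$ of Assumption~\ref{ass:B}) and the weighted linear Stokes estimate with truncated weights $\omega^j=\min\{1,j\omega_0\}\nearrow 1$, it bootstraps $\varepsilon(v_1-v_2)\in L^2(\Omega)$ even though neither solution individually need be in the energy class; after that, $\varphi=v_1-v_2$ is an admissible test function and strict monotonicity finishes the proof. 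You should replace the Lipschitz-truncation step by this self-improvement of the difference, or supply a genuinely new control of the bad-set term.
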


Let us remark that Theorem \ref{th1} is optimal with respect to weighted spaces, since the Laplace operator is continuous in weighted Lebesgue spaces $L^q_\omega$, as long as $\omega \in A_q$, $q\in (1,\infty)$; see for instance Sawyer \cite{Saw83}, Theorem A. Observe that our Theorem \ref{th1} covers the entire range $q \in (1, \infty)$.

Let us present a short heuristics, explaining why weighted estimates are essential for an existence theory in the case of rough data. Namely, by the choice of a proper weight, the estimate \eqref{eq:th1} (utilized for a regularised problem) implies that both $\D$ and $\Scal(\cdot, \D)$ are in a weighted $L^2_\omega$ space (uniformly in a regularisation). This fact establishes a duality relation between $\D$ and $\Scal(\cdot, \D)$ which is unavailable in case of rough data within the standard Lebesgue spaces. Exploited correctly, this duality it will eventually allow to adapt a very weak version of the Minty trick. 

\begin{rem}[Measure-valued forcings are included]
 Since forcing of  \eqref{eq:sysA} is in a divergence form, we indeed cover cases of a very general forcing, for instance bounded Radon measures. Indeed, for a vector-valued bounded Radon measure $\mu$, let us solve
$-\divv \nabla h = \mu$. Hence $\nabla h \in L^r$ with any $r \in [1, \frac{n}{n-1})$, so $\nabla h = f$ is within scope of Theorems \ref{th},  \ref{th1}.
\end{rem}


For the sake of completeness and to demonstrate the generality of our approach, let us finally present the respective result for systems with inhomogeneous boundary conditions  and prescribed compressibility $d$. Namely, let us consider
\begin{equation}\label{general}
  \begin{aligned}
  -\divv\Scal(x, \D)  + \nabla \pi &=-\divergence f \quad \text{ in } \Omega,\\
    \divergence v&=d \qquad \quad \; \text{ in } \Omega,\\
    \gamma(v)&=g \qquad \quad \;  \text{ on } \partial\Omega.
  \end{aligned}
  \end{equation}
	where $\gamma$ is the trace operator. In the result below, $T^q_\omega(\Omega)$ denotes the weighted trace space, see the subsection \ref{ssec:defs}. It holds
	\begin{corollary}\label{cor1}
Let $f,d\in L^q_\omega (\Omega)$, $g\in T^q_\omega(\Omega)$ 
with $1<q<\infty$, $\omega\in A_q$ and let $\Scal$ satisfy Assumption~\ref{ass:A}. 
Then \eqref{eq:sysA} admits a solution $(v, \pi) \in W_{\omega}^{1,q} (\Omega) \times {\hL_{\omega}^{q} }(\Omega)$. 
Moreover, if any solution of \eqref{general} for an $s>1$ enjoys  $(v,\pi) \in W^{1,s} (\Omega)\times { \hL^s }(\Omega),\gamma(v)=g$, then it satisfies
\begin{equation}
\norm{\nabla v}_{L^{q}_\omega (\Omega)} +\norm{\pi}_{L^q_\omega(\Omega)}  \le C (1+ \norm{f}_{L^q_\omega (\Omega)}+\norm{d}_{L^q_\omega (\Omega)}+\norm{ g}_{\hat T_\omega^{q}(\Omega)} ),
\end{equation}
The constant $C$ depends on $q,A_q$ the on the $C^1$-property of $\Omega$ and the quantities in Assumption~\ref{ass:A}.

If Assumption~\ref{ass:B} is additionally fulfilled, then $v, \pi$ solving  \eqref{eq:sysA} is unique in $(\gamma^{-1}(g)+W^{1,q}_0 (\Omega))\times { \hL_\omega^q }(\Omega)$. 
\end{corollary}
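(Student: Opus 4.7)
The plan is to reduce \eqref{general} to the homogeneous Dirichlet, solenoidal problem covered by Theorem~\ref{th1} via a suitable lift, and then read off existence, the a priori estimate, and uniqueness. Throughout, one implicitly assumes the compatibility relation $\int_\Omega d\dx = \int_{\partial\Omega} g\cdot \nu\,{\rm d}\mathcal{H}^{n-1}$, without which \eqref{general} admits no solution.

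First I would construct a lifting $w \in W^{1,q}_\omega(\Omega)$ satisfying $\divv w = d$ in $\Omega$, $\gamma(w) = g$ on $\partial \Omega$, and
\[
\norm{\nabla w}_{L^q_\omega(\Omega)} \le C\bigl(\norm{d}_{L^q_\omega(\Omega)} + \norm{g}_{T^q_\omega(\partial \Omega)}\bigr).
\]
The standard recipe is to first extend $g$ via a right inverse of the trace to some $W \in W^{1,q}_\omega(\Omega)$, and then correct its divergence by applying the weighted Bogovski\u\i\ operator to $d - \divv W$. Weighted boundedness of both the trace right-inverse and of Bogovski\u\i\ on $C^1$-domains is classical and, in any case, available within the present paper.

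Setting $u := v - w$, the pair $(u,\pi)$ formally solves
\begin{align*}
  -\divv \widehat{\Scal}(x, \varepsilon(u)) + \nabla \pi &= -\divv \widehat f \quad \text{in } \Omega,\\
  \divv u = 0 \quad \text{in } \Omega, &\qquad u = 0 \quad \text{on } \partial \Omega,
\end{align*}
with $\widehat{\Scal}(x,\eta) := \Scal(x, \eta + \varepsilon(w)(x)) - \Scal(x, \varepsilon(w)(x))$ and $\widehat f := f - \Scal(\cdot,\varepsilon(w))$. A direct check using Assumption~\ref{ass:A} shows that $\widehat{\Scal}$ inherits monotonicity, the linear-at-infinity property, and growth/coercivity, with constants that now tolerate an $L^1_\omega$ remainder proportional to $|\varepsilon(w)|^2$; similarly $\widehat f \in L^q_\omega(\Omega)$ with norm governed by the data. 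Assumption~\ref{ass:B}, if present for $\Scal$, transfers verbatim to $\widehat{\Scal}$.

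Applying Theorem~\ref{th1} to the reduced problem yields $(u, \pi) \in W^{1,q}_{0,\omega}(\Omega) \times \hL^q_\omega(\Omega)$ together with the corresponding bound, which combines with the lifting estimate and the linear growth of $\Scal$ to produce the announced control on $v = u + w$ and $\pi$. Uniqueness under Assumption~\ref{ass:B} follows since the difference of two solutions with identical data satisfies a homogeneous instance of \eqref{general}, to which the uniqueness part of Theorem~\ref{th1} applies after the same reduction. The one point I expect to demand genuine care is that the constants $c_1, c_2$ of Assumption~\ref{ass:A} get replaced by merely $L^1_\omega$ data coming from $\varepsilon(w)$: one must either verify that the proof of Theorem~\ref{th1} accommodates such $x$-dependent bounds, or approximate $w$ by smoother lifts and pass to the limit using the stability furnished by \eqref{eq:th1}.
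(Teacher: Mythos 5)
Your overall strategy---lift the data by some $w$ with $\divv w = d$, $\gamma(w)=g$, shift the nonlinearity to $\widehat{\Scal}(x,\eta)=\Scal(x,\eta+\varepsilon(w)(x))-\Scal(x,\varepsilon(w)(x))$, and invoke Theorem~\ref{th1} as a black box---has a genuine gap at exactly the point you flag, and the gap is not closable by either of your suggested repairs. Assumption~\ref{ass:A} requires $c_1,c_2$ to be \emph{constants} and, more importantly, requires the linear-at-infinity limit \eqref{eq:ass1} to hold \emph{uniformly in $x$}. For $\widehat{\Scal}$ the additive error is of order $|\varepsilon(w)(x)|$, which is only $L^q_\omega$ and in general unbounded; moreover, at points where $|\varepsilon(w)(x)|$ is large, $|Q|\to\infty$ does not force $|Q+\varepsilon(w)(x)|$ to be large at any $x$-uniform rate, so \eqref{eq:ass1} genuinely fails for $\widehat{\Scal}$. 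This uniformity is precisely what drives the absorption in the proof of Theorem~\ref{thm:a-priori} (the fixed threshold $m_0$) and enters the constant in \eqref{eq:th1}. Consequently: (i) "verifying that the proof accommodates $x$-dependent bounds" is not a citation of Theorem~\ref{th1} but a reproof of it under weaker hypotheses (the absorption can indeed be salvaged because the error $\epsilon|\varepsilon(w)|$ can be moved to the right-hand side, but the existence machinery---in particular the Minty argument, where $\Scal(x,B)$ must lie in the correct weighted dual space for all admissible $B$---has to be rechecked throughout); and (ii) approximating $w$ by smooth lifts produces constants in \eqref{eq:th1} depending on $\norm{\varepsilon(w_\epsilon)}_{L^\infty}$ through the degenerating $c_2$ and $m_0$, so the stability you invoke is not uniform along the approximation and the limit passage does not close.

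The paper takes a different and safer route: it never shifts the nonlinearity. The a~priori estimate of the corollary is already contained verbatim in the inhomogeneous part of Theorem~\ref{thm:a-priori}, where $d$ and $g$ are absorbed into the \emph{linear} weighted Stokes theory (Lemma~\ref{lem:CZ}); a lifting $\gamma^{-1}g$ and the Bogovskii operator are legitimately used there because that problem is linear and its constants do not see $\Scal$. For existence, the proof of Theorem~\ref{th1} is rerun with the original $\Scal$, and only the limit-identification step is modified: the div--curl lemma and the solenoidal Lipschitz truncations are applied to the corrected function $v-\gamma^{-1}(g)-\Bog(v-\gamma^{-1}(g))$, which is solenoidal with zero trace, the extra terms converging by weak--strong coupling. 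Your remarks on the lifting construction and on the implicit compatibility condition $\int_\Omega d\,\dx=\int_{\partial\Omega}g\cdot\nu$ are correct and worth keeping, but as written the invocation of Theorem~\ref{th1} for $\widehat{\Scal}$ is not justified.
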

Finally, let us state the following remark.
\begin{rem}[A slight relaxation of assumptions]
In Assumption~\ref{ass:A} and Assumption~\ref{ass:B} the linearity-at-infinity can be relaxed. Indeed whenever it the assumptions are requested one my relax it in the following way. For every $c_0,c_1,c_2$ there exists an $\epsilon_0(c_0,c_1,c_2), m_0\in [0,\infty)$, such that \eqref{eq:ass1} can be replaced by
$
 \frac{\abs{\Scal(x,Q^s)- {\mu} Q^s}}{\abs{Q^s}}\leq \epsilon_0
$
for all $\abs{Q}\geq m_0$. 
And analogous \eqref{eq:ass2} by
$
\big|\frac{\partial \Scal (x,Q^s)}{\partial Q^s}- {\mu \Id} \, \big| \leq \epsilon_0
$
  for all $\abs{Q}\geq m_0$. 
\end{rem}

\subsection{Main technical results}

Let us gather in this section two technical results, that we would like to highlight as potentially useful in mathematical fluid dynamics. First of the is the solenoidal, weighted, biting div-curl lemma, which is a solenoidal version of Theorem 2.6 of \cite{BulDieSch15}, itself being a far generalisation of the Murat-Tartar result. 

\begin{theorem}[solenoidal, weighted, biting div--curl lemma]\label{T5}
  Let $\Omega\subset \setR^n$ denote an open, bounded set. Assume that for a given $q\in (1,\infty)$ and $\omega \in
  A_q$, there is a sequence of measurable, tensor-valued functions $a^k, s^k: \Omega \to \RNn$, $k \in \N$, such that $k$-uniformly
  \begin{equation}
\norm{a^k}_{L^q_\omega(\Omega)}   + \norm{s^k}_{L^{q'}_\omega(\Omega)}   \le C. \label{bit3}
  \end{equation}
  Furthermore, assume that for every bounded sequence
  $\{c^k\}_{k=1}^{\infty}$ in $W^{1,\infty}_{0} (\Omega)$ and every bounded solenoidal sequence   $\{d^k\}_{k=1}^{\infty}$ in $W^{1,\infty}_{0, \divv} (\Omega)$ such that
  $$
  \nabla c^k \rightharpoonup^* 0 \qquad \textrm{weakly$^*$ in }
  L^{\infty}(\Omega), \qquad   \nabla d^k \rightharpoonup^* 0 \qquad \textrm{weakly$^*$ in }
  L^{\infty}(\Omega)
  $$
one has
  \begin{align}
    \lim_{k\to \infty} \int_{\Omega} s^k \cdot \nabla d^k \dx
    &=0, \label{bit4}
    \\
    \lim_{k\to \infty} \int_{\Omega} a^k_i \partial_{x_j} c^k -
    a^k_j \partial_{x_i} c^k \dx &=0 &&\textrm{for all }
    i,j=1,\ldots,n.\label{bit5}
  \end{align}
  and that
  \begin{equation}
     \tr (a^k) \to \tr \; a \quad \text{ almost everywhere.}
    \label{bit6}
    \end{equation}
  Then, there exists a (non-relabeled) subsequence $(a^k,b^k)$  and a non-decreasing sequence of measurable subsets
  $\Omega_j\subset\Omega$ with $|\Omega \setminus \Omega_j|\to 0$ as $j\to
  \infty$ such that
  \begin{align}
  a^k &\rightharpoonup a &&\textrm{weakly in } L^1(\Omega), \label{bitfa}\\
  s^k &\rightharpoonup s &&\textrm{weakly in } L^1(\Omega), \label{bitfb}\\
  a^k \cdot s^k \omega &\rightharpoonup a \cdot s\, \omega &&\textrm{weakly in } L^1(\Omega_j) \quad \textrm{ for all } j\in \mathbb{N}. \label{bitf}
  \end{align}
\end{theorem}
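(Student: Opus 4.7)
The strategy is to reduce Theorem~\ref{T5} to its non-solenoidal predecessor, Theorem~2.6 of \cite{BulDieSch15}, via a Helmholtz-type pressure splitting of $s^k$. The only gap between the two statements is that \eqref{bit4} here may be tested solely against solenoidal $d^k$; this corresponds to an ambiguity in $s^k$ modulo an additive tensor $\pi^k \id$ with $\pi^k$ a scalar pressure. The trace hypothesis \eqref{bit6} is tailored precisely to absorb this ambiguity in the pairing $a^k \cdot s^k$, since $a^k \cdot (\pi^k \id) = \pi^k \tr(a^k)$.

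First extract weak subsequences: by reflexivity of $L^q_\omega(\Omega)$ and $L^{q'}_\omega(\Omega)$ for $\omega \in A_q$ (noting $\omega^{1-q'} \in A_{q'}$), the bound \eqref{bit3} gives $a^k \rightharpoonup a$ in $L^q_\omega$ and $s^k \rightharpoonup s$ in $L^{q'}_\omega$, which already yields \eqref{bitfa}--\eqref{bitfb}. The exhaustion $\Omega_j$ is constructed as sub-level sets of a common weighted Hardy--Littlewood maximal function of $|a^k|$ and $|s^k|$: the $A_q$-boundedness of the maximal operator ensures $|\Omega \setminus \Omega_j| \to 0$ uniformly in $k$, and on each $\Omega_j$ the families $\{a^k\}$, $\{s^k\}$ are weighted-equi-integrable, which renders the subsequent weak-$L^1$ arguments available.

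Next, for each $k$ produce $\pi^k \in \hL^{q'}_\omega(\Omega)$ such that $\tilde s^k := s^k - \pi^k \id$ satisfies the stronger, non-solenoidal orthogonality condition of \cite[Thm.~2.6]{BulDieSch15}, namely $\int \tilde s^k \cdot \nabla c^k\, \dx \to 0$ for every $c^k \in W^{1,\infty}_0(\Omega;\setR^n)$ with $\nabla c^k \rightharpoonup^* 0$ in $L^\infty$. Existence of such a pressure and the uniform bound $\norm{\pi^k}_{L^{q'}_\omega(\Omega)} + \norm{\tilde s^k}_{L^{q'}_\omega(\Omega)} \le C$ rely on weighted Calder\'on--Zygmund and Bogovskii theory for $A_{q'}$ weights on the $C^1$ domain $\Omega$; the orthogonality itself is verified by splitting an arbitrary admissible $c^k$, via the solenoidal Lipschitz approximation on domains announced in the Introduction, into a solenoidal piece (on which \eqref{bit4} applies to $s^k$ directly) and a gradient-type remainder that is compensated by $\pi^k \id$. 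Decompose
\begin{equation*}
a^k \cdot s^k \, \omega \;=\; a^k \cdot \tilde s^k \, \omega \;+\; \pi^k \tr(a^k) \, \omega.
\end{equation*}
The second summand converges weakly in $L^1(\Omega_j)$ to $\pi \tr(a) \, \omega$: $\pi^k \rightharpoonup \pi$ in $L^{q'}_\omega$, $\tr(a^k) \to \tr(a)$ almost everywhere by \eqref{bit6}, and the weighted equi-integrability of $\tr(a^k)$ on $\Omega_j$ gives the product convergence via Vitali's theorem. For the first summand, Theorem~2.6 of \cite{BulDieSch15} applies directly to $(a^k, \tilde s^k)$, producing $a^k \cdot \tilde s^k \, \omega \rightharpoonup a \cdot \tilde s \, \omega$ in $L^1(\Omega_j)$. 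Adding the two contributions yields \eqref{bitf}. The principal obstacle is carrying out the Helmholtz splitting with sharp $A_{q'}$-weighted estimates on a merely $C^1$ domain and, simultaneously, verifying the extended orthogonality of $\tilde s^k$ under the solenoidal Lipschitz correction — this is exactly the step where the paper's own solenoidal Lipschitz approximation result is indispensable.
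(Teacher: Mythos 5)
Your high-level architecture matches the paper's: introduce a scalar pressure so that the corrected stress sequence satisfies the \emph{full} (non-solenoidal) orthogonality condition of Theorem~2.6 of \cite{BulDieSch15}, apply that theorem, and absorb the pressure contribution $\pi^k\tr(a^k)\,\omega$ using hypothesis \eqref{bit6} together with Egoroff/Vitali. The extraction of the weak limits and the treatment of the trace term are fine. However, the crucial step --- verifying that $\tilde s^k=s^k-\pi^k\Id$ (in the paper, $b^k=s^k+p^k\Id$) actually satisfies $\int_\Omega \tilde s^k\cdot\nabla c^k\dx\to 0$ for \emph{every} admissible $c^k$, not just solenoidal ones --- is where your argument has a genuine gap, and it is precisely the part that constitutes the bulk of the paper's proof.

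Your proposed verification is to split an arbitrary $c^k\in W^{1,\infty}_0(\Omega)$ into a solenoidal piece plus a ``gradient-type remainder that is compensated by $\pi^k\Id$.'' This does not go through. First, there is no bounded correction at the $L^\infty$ level: the Bogovskii operator is bounded $L^p\to W^{1,p}$ only for $1<p<\infty$, so one cannot decompose a $W^{1,\infty}_0$ test sequence into a bounded $W^{1,\infty}_{0,\divv}$ piece plus a remainder; the solenoidal Lipschitz approximation of Theorem~\ref{thm:liptrunc} truncates an already solenoidal Sobolev function and does not provide such a decomposition either. Second, even granting a decomposition $c^k=d^k+r^k$, the term $\int_\Omega s^k\cdot\nabla r^k-\int_\Omega\pi^k\divv r^k\dx$ has no reason to vanish for a pressure defined abstractly by a Helmholtz projection; nothing ``compensates'' it automatically. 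The paper circumvents this by solving the weighted Stokes problem $-\divv\,\epsilon(w^k)+\nabla p^k=-\divv s^k$, $\divv w^k=0$, so that $\int_\Omega b^k\cdot\nabla c^k=\int_\Omega\epsilon(w^k)\cdot\nabla c^k$ identically, and the required orthogonality becomes equivalent to \emph{strong} $L^1$ precompactness of $\nabla w^k$. Establishing that compactness is the analytical core of the proof: it uses the solenoidal Lipschitz truncations $w^{k,\lambda}$, an auxiliary Stokes problem with the dual forcing $Q(\eta)=|\eta|^{q'-2}\eta$, a weighted biting-limit argument to pass from unweighted to weighted weak-$L^1$ convergence of $Q(\nabla w^{k})\cdot\nabla w^k\,\omega$, and finally a Minty monotonicity argument to get $\nabla w^k\to\nabla w$ almost everywhere. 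None of this machinery appears in your proposal, so as written the reduction to the non-solenoidal theorem is not justified. (A minor additional point: the theorem assumes only that $\Omega$ is open and bounded, so the smoothness of $\partial\Omega$ needed for the weighted Stokes theory must itself be argued away, as the paper does by noting that the conclusion is only claimed on large subsets $\Omega_j$.)
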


The proof of Theorem \ref{T5}, presented in Section \ref{sec:techex}, relies among others on the following fine-tuning of the solenoidal Lipschitz truncations. 

\begin{theorem}[Solenoidal Lipschitz approximations on domains]
  \label{thm:liptrunc}
  Let $\Omega \subset \setR^n$ and $s>1$. Let $g \in W^{1,s}_{0,\divergence}(\Omega)$. Then for
  any $\lambda > 1 $ there exists a \emph{solenoidal Lipschitz truncation} $g^\lambda \in
  W^{1,\infty}_{\divergence}(\Omega)$ such that
  \begin{alignat}{2}
    \label{eq:lip1}
    g^\lambda &= g \quad \text{and} \quad \nabla g^\lambda = \nabla g
    &\qquad&\text{in $\set{M(\nabla g)\leq \lambda}\cap\Omega$},
    \\
    \label{eq:lip2}
    \abs{\nabla g^\lambda} &\leq \abs{\nabla g}\chi_{\set{M(\nabla
        g)\leq\lambda}}+C\, \lambda
    \chi_{\set{M(\nabla g)>\lambda}} &&\textrm{almost
      everywhere}.
  \end{alignat}
  Further, if $\nabla g\in L^{p}_\omega(\Omega)$ for
  some $1\leq p<\infty$ and $\omega \in {A}_p$, then
  \begin{align}\label{itm:weight2}
    \begin{aligned}
      \int_{\Omega}\abs{\nabla g^\lambda}^p \omega \dx &\leq
      C\int_{\Omega} \abs{\nabla g}^p \omega \dx,
      \\
      \int_{\Omega} \abs{\nabla (g-g^\lambda)}^p \omega \dx&\leq
      C \int_{\Omega\cap \set{M(\nabla g)>\lambda}} \abs{\nabla g}^p \omega \dx,
    \end{aligned}
  \end{align}
where the constant $C$ depends on $({A}_p(\Omega),\Omega, N, p)$.
\end{theorem}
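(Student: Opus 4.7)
The plan is to adapt the Whitney-type Lipschitz truncation of Acerbi--Fusco/Liu to the solenoidal and weighted setting by coupling it with a localized Bogovskii correction, in the spirit of recent works on generalized Newtonian fluids. First, since $g\in W^{1,s}_{0,\divv}(\Omega)$, I would extend it by zero to $\bar g \in W^{1,s}(\Rn)$; the vanishing trace guarantees that $\divv \bar g=0$ on all of $\Rn$. Consider the open bad set $\Ol := \set{M(\nabla\bar g)>\lambda}\cup(\Rn\setminus\Omega)$ and a Whitney decomposition $\{Q_i\}$ of it with a subordinate partition of unity $\{\varphi_i\}$ satisfying $\abs{\nabla\varphi_i}\lesssim \abs{Q_i}^{-1/n}$ and uniformly finite overlap on the enlarged cubes $\tfrac 54 Q_i$. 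Define the preliminary (not yet solenoidal) truncation
\[
\tilde g^\lambda := \bar g\, \chi_{\Rn\setminus \Ol} + \sum_i \varphi_i \mean{\bar g}_{\frac 54 Q_i},
\]
which by classical arguments agrees with $g$ on $\set{M(\nabla g)\leq\lambda}\cap\Omega$ and satisfies the pointwise bound $\abs{\nabla\tilde g^\lambda}\leq \abs{\nabla g}\chi_{\set{M(\nabla g)\leq \lambda}}+C\lambda\,\chi_{\Ol}$.

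Second, I would restore the divergence constraint cube by cube. Using $\divv\bar g=0$ and a telescoping identity, the expression
\[
\divv \tilde g^\lambda=\sum_j \nabla\varphi_j \cdot \bigl(\mean{\bar g}_{\frac 54 Q_j}-\mean{\bar g}_{\frac 54 Q_i}\bigr)\qquad\text{on }Q_i
\]
is supported in $\Ol$, has zero mean on $\tfrac 54 Q_i$, and is pointwise bounded by $C\lambda$. Applying a scaled Bogovskii operator $\Bog_{\frac 54 Q_i}$ to this error term yields a correction $\beta_i$ supported in $\tfrac 54 Q_i$ whose divergence cancels $\divv\tilde g^\lambda$ on $Q_i$. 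Setting $g^\lambda := \tilde g^\lambda-\sum_i\beta_i$ then produces an element of $W^{1,\infty}_{\divv}(\Omega)$ verifying \eqref{eq:lip1}. The main technical obstacle is to show that each correction inherits the bound $\norm{\nabla\beta_i}_{L^\infty(\frac 54 Q_i)}\leq C\lambda$ \emph{independently of $i$}: here one exploits that adjacent Whitney cubes have comparable size and that $\tfrac 54 Q_i$ meets a good cube on which $M(\nabla\bar g)\le C\lambda$, so Poincar\'e yields $\abs{\mean{\bar g}_{\frac 54 Q_j}-\mean{\bar g}_{\frac 54 Q_i}}\lesssim \abs{Q_i}^{1/n}\lambda$; combined with the scale-invariant $W^{1,\infty}$-estimate for a suitably chosen Bogovskii solution on the unit cube applied to the rescaled bounded, mean-free input, this gives the required uniform bound, and summing over the finite-overlap family yields \eqref{eq:lip2}.

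Finally, the weighted estimates \eqref{itm:weight2} follow from the pointwise bound on $\nabla g^\lambda$. The first inequality is obtained by raising \eqref{eq:lip2} to the $p$-th power, multiplying by $\omega$ and integrating; the bad-set contribution $C\lambda\,\chi_{\set{M(\nabla g)>\lambda}}$ is handled via a layer-cake argument combined with the Muckenhoupt-weighted boundedness $\norm{M(\nabla g)}_{L^p_\omega}\lesssim \norm{\nabla g}_{L^p_\omega}$ for $\omega\in A_p$. The second inequality follows by applying the same scheme to $g-g^\lambda$, which by \eqref{eq:lip1} is supported in $\set{M(\nabla g)>\lambda}\cap\Omega$ and whose gradient is controlled pointwise in exactly the same way. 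The implicit constants depend only on $A_p(\omega)$, on $p$, $N$, and on the Whitney and dilation constants determined by $\Omega$.
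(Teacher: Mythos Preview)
Your correction step via local Bogovski\u{\i} operators contains a genuine gap. First, the claim that $\divv\tilde g^\lambda$ has zero mean on $\tfrac54 Q_i$ is not justified and is in general false: by the divergence theorem, $\int_{\frac54 Q_i}\divv\tilde g^\lambda=\int_{\partial(\frac54 Q_i)}\tilde g^\lambda\cdot\nu$, and since $\tfrac54 Q_i\subset\Ol$ with $\tilde g^\lambda$ equal to the piecewise-constant interpolant $\sum_j\varphi_j\mean{\bar g}_{\frac54 Q_j}$ there, this boundary flux has no reason to vanish. Without the mean-zero condition, $\Bog_{\frac54 Q_i}$ cannot be applied. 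Second, even if one arranged a mean-zero localization, you invoke a ``$W^{1,\infty}$-estimate for the Bogovski\u{\i} solution applied to the rescaled bounded input''; no such estimate exists. The Bogovski\u{\i} operator is bounded $L^p\to W^{1,p}$ only for $1<p<\infty$ and fails at $p=\infty$, so a merely bounded right-hand side does not produce a Lipschitz correction with a constant independent of the cube. These two obstructions are precisely why the construction used in the paper (following \cite{BreDieSch13}) does \emph{not} go through Bogovski\u{\i}: instead one writes the solenoidal field $g$ as the divergence of an antisymmetric potential via an inverse-curl operator (Remark~2.18 in \cite{BreDieSch13}), performs the Whitney truncation on that potential, and then takes the divergence of the truncated potential. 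Solenoidality is then automatic, no correction is needed, and the pointwise gradient bound \eqref{eq:lip2} follows from second-order estimates on the potential.

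A secondary issue concerns the weighted estimates \eqref{itm:weight2}. Your route through the strong $L^p_\omega$-boundedness of the maximal function gives only the global bound $\lambda^p\,\omega(\{M(\nabla g)>\lambda\})\le C\int_\Omega|\nabla g|^p\omega$, not the localized inequality $\lambda^p\,\omega(\{M(\nabla g)>\lambda\})\le C\int_{\{M(\nabla g)>\lambda\}}|\nabla g|^p\omega$ that the second line of \eqref{itm:weight2} requires; moreover the strong-type bound is unavailable at $p=1$. The paper obtains the localized version for all $1\le p<\infty$ by a direct covering argument: one covers $\{M(\nabla g)>\lambda\}$ by Besicovich balls on which $\dashint|\nabla g|\sim\lambda$, and then uses the defining $A_p$ inequality on each ball rather than the maximal theorem.
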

Proof of Theorem \ref{thm:liptrunc} can be found in Section \ref{sec:techex}.

\subsection{Further research}

Let us point out the significance of our results for future research, particularly,  the flexibility of the developed existence scheme.

Firstly, consider the full Navier-Stokes analogue of \eqref{eq:sysA}. It involves an additional convective term. However, in three dimensions, it is possible to treat it as a right hand side with respect to a-priori estimates and as a compact perturbation  with respect to the existence analysis. This will be presented in our future work. For results on existence of solutions to steady non-Newtonian Navier-Stokes flows with non-rough forcing, see Diening, M\'alek \& Steinhauer \cite{DieMalSte08} and Bul\'i\v{c}ek, M\'alek, Gwiazda \& \'Swierczewska-Gwiazda \cite{BulMalGwiaSwi09}.

Another generalization is related to considering degeneracies, for instance, the degenerate power-law model $\Scal(x,Q)=\nu\abs{Q}^{p-2}Q$. Recently, it was possible to establish an existence theory for the related $p$-Laplace system, see Bul\'i\v{c}ek \& Schwarzacher~\cite{BulSch16}. Even though it holds only for exponents $q$ being close to the natural exponent $p$, it is the first existence proof for degenerate systems below the duality exponent. A generalization to degenerate fluids seems achievable. It would also match into the regularity
 theory available for the  degenerate Stokes systems, compare Diening \& Kaplick\'y~\cite{DieKap12} and Diening, Kaplick\'y \& Schwarzacher \cite{DieKapSch13}.

Finally, we wish to emphasize that the very weak weighed duality relation discovered here has a considerable potential for numerical schemes and their analysis.

\section{Preliminaries}
\subsection{Structure of the paper}\label{S2}
This section gathers certain auxiliary tools for the proofs. Section \ref{sec:reg} presents an \emph{a-priori} type estimate: in Theorem \ref{thm:a-priori} there, we provide quantitative regularity estimate \eqref{eq:opt}, under an additional assumption that solution to \eqref{eq:sysA} belongs to a certain $L^s (\Omega)$ regularity class, $s>1$. This result relies on a regularity theory for weighted linear Stokes, that we partially needed to provide.
Section \ref{sec:techex} contains proofs of the main technical results, namely of Theorem \ref{T5} and of Theorem \ref{thm:liptrunc}.
Finally, Section \ref{sec:ex} provides proofs of our main theorems, presented in Section \ref{ssec:res}.

\subsection{Basic notation and definitions}\label{ssec:defs}
\subsubsection{Function spaces}
For $p\in [1,\infty)$ and $\omega$ being a weight \emph{i.e.} a measurable function that is almost everywhere finite and positive, let us define the weighted Lebesgue space $L^p_\omega(\Omega)$ and its norm $\norm{\cdot}_{L^p_\omega}$ as
$$
L^p_{\omega}(\Omega):=\biggset{f:\Omega \to \setR^n; \; \text{measurable},\;  \norm{f}_{L^p_\omega}
:= \bigg(\int_{\Omega} |u(x)|^p\omega(x)\dx\bigg)^{\frac 1p} <\infty}.
$$
The space ${\mathring L^p_{\omega}(\Omega) }$ contains all functions $f\in L^p_\omega(\Omega)$ with $\int_\Omega f\, dx=0$.
The weighted Sobolev space $W^{1,p}_\omega(\Omega)$ consists of all functions where both the distributional derivative $\nabla f$ and $f$ are in $L^p_\omega(\Omega)$. 

The homogeneous Sobolev space $\hat {W}^{1,p}_\omega(\Omega)$  is the space of all functions such that $\nabla f\in L^p_\omega(\Omega)$ (and $f$ belongs to the natural embedded space; $\hat {W}^{1,p}_\omega(\Omega) \neq {W}^{1,p}_\omega(\Omega)$ only in unbounded domains). 

Since weights may have a certain impact on the exact shape of trace space, one typically defines it only semi-explicitly as $\gamma ({W}^{1,p}_\omega(\Omega) {\cap W^{1,1}(\Omega)})$, where $\gamma: {W}^{1,1}(\Omega) \to {L}^{1}(\partial \Omega) $ is the canonical trace operator. In case of an unbounded domain, one additionally localizes the domain by an intersection with a ball.  For some more details, compare Fr\"ohlich \cite{Fro07} and \cite{Fro03}, section 3.3 with their references. The zero trace subspaces of   ${W}^{1,p}_{\omega}(\Omega)$ and  $\hat {W}^{1,p}_{\omega}(\Omega)$ 
are denoted by  ${W}^{1,p}_{0,\omega}(\Omega)$ and ${\hat W}^{1,p}_{0,\omega}(\Omega)$, respectively. For brevity, we will write $ T^{q}_\omega(U)$ for  $\gamma ({W}^{1,p}_\omega(\Omega)\cap {W^{1,1}(\Omega)})$ and $\hat T^{q}_\omega(U)$ for  $\gamma (\hat {W}^{1,p}_\omega(\Omega)\cap {W^{1,1}_{\text{loc}}(\overline{\Omega}))}$.

All the mentioned spaces are Banach spaces. {In the case considered here, namely} the case of Muckenhoupt weights $\omega \in A_p$ and $p\in (1,\infty)$, the above defined spaces are additionally reflexive and separable. These and more properties are discussed in Stein~\cite[Chapter 3]{Ste93}, for instance. Moreover, by \eqref{eq:lqprop} below, we find in case of Muckenhoupt weights $\omega \in A_p$ that $W^{1,p}_\omega(\Omega)\subset W^{1,1}(\Omega)$ and $\hat{W}^{1,p}_\omega(\Omega)\subset W^{1,1}_{\text{loc}}(\overline{\Omega})$,
hence functions that are bounded in $W^{1,p}_\omega(\Omega)$, $\hat{W}^{1,p}_\omega(\Omega)$ 
possess weak derivatives and well-defined traces.
 
Finally,  $W_{0, \divv,\omega}^{1,q} (\Omega)$ is defined as the closure of $C^\infty_{0,\divergence}(\Omega)$ (the smooth, compactly supported and solenoidal functions) with respect to the $W^{1,q}_{\omega}$-norm.

For any vector- or tensor-valued $f\in L^1_{\loc}(\setR^n)$ we define its Hardy-Littlewood maximal function $Mf$ in a standard manner as follows
\[
Mf(x):=\sup_{R>0}\;\dashint_{B_R(x)}\abs{f(y)}\dy, 
\]
where $B_R(x)$ denotes a ball with radius $R$ centered at $x\in \setR^n$. 

\subsubsection{A notion of solution}
Let us introduce the standard 
\begin{definition}[Distributional solution]
A couple $(v, p) \in {W}^{1,1}_{0,\divergence}(\Omega) \times  {L}^{1}( \Omega)$ is a distributional solution to \eqref{eq:sysA} iff for any $\phi \in C^\infty_0 (\Omega)$ holds
\[
\begin{aligned}
\int_\Omega \Scal(x, \D) \nabla \phi - p \, \divv \phi  &= \int_\Omega f \nabla \phi, \\
 \gamma (u) & = 0.
 \end{aligned}
\]
\end{definition}
An analogous definition, with natural modifications, will be used for the inhomogenous problem.

In the following, we will sometimes call a $(v,p)$ a weak solution, provided it belongs to the optimal regularity class (with respect to regularity of $f$). 
\subsection{An algebraic lemma}
Let us begin with an algebraic Lemma, which can be found as Lemma 4.1 in  Buli\v{c}ek, Diening \& Schwarzacher  \cite{BulDieSch15}.
\begin{lemma}\label{L:algebra}
Let $\Scal$ fulfill Assumptions \ref{ass:A}, \ref{ass:B}. Then for every  $\delta>0$ there exists $C$ such that for all $x\in \Omega$ and all $Q, P \in \mathbb{R}^{n\times n}$ there holds
\begin{equation}\label{algebra}
|\Scal(x,Q)-\Scal(x,P) -
\mu (Q-P)|\le \delta |Q-P| + C(\delta).
\end{equation}
\end{lemma}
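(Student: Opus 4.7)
The plan is to reduce the statement to the quantitative control on $\partial\Scal/\partial Q^s$ provided by \eqref{eq:ass2}. Introduce the auxiliary map
\[
F(x,R):=\Scal(x,R)-\mu R
\]
on symmetric tensors $R$; since $\Scal$ sees only the symmetric part, the task is equivalent to proving $|F(x,Q)-F(x,P)|\le \delta\,|Q-P| + C(\delta)$ uniformly in $x$ for symmetric $Q,P$. Given $\delta>0$, first invoke \eqref{eq:ass2} to fix a threshold $m=m(\delta)$ so large that
\[
\Bigl|\frac{\partial \Scal}{\partial R}(x,R)-\mu\,\Id\Bigr|\le \delta \quad\text{whenever } |R|\ge m,
\]
uniformly in $x$. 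This will yield a Lipschitz bound of size $\delta$ for $F(x,\cdot)$ on the region $\set{|R|\ge m}$.

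Next, connect $P$ to $Q$ by the affine segment $R(t):=P+t(Q-P)$, $t\in[0,1]$. The function $t\mapsto|R(t)|$ is convex on $[0,1]$, being the composition of the norm with an affine map, so the sublevel set $\Sigma:=\set{t\in[0,1]:|R(t)|<m}$ is an interval, say $(a,b)\subset[0,1]$ (possibly empty or touching an endpoint). On $[0,1]\setminus\Sigma$ the fundamental theorem of calculus combined with the derivative bound above gives
\[
|F(x,R(a))-F(x,P)|+|F(x,Q)-F(x,R(b))|\;\le\; \delta\bigl(a+(1-b)\bigr)\,|Q-P| \;\le\; \delta\,|Q-P|.
\]

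For the piece inside $\Sigma$, continuity of $R(\cdot)$ together with the definition of $\Sigma$ forces $|R(a)|,|R(b)|\le m$. The linear growth bound from Assumption~\ref{ass:A} then yields $|F(x,R(a))|,|F(x,R(b))|\le (c_1+\mu)\,m+c_2$, whence
\[
|F(x,R(b))-F(x,R(a))|\;\le\; 2\bigl((c_1+\mu)\,m+c_2\bigr)\;=:\;C(\delta).
\]
Assembling the three pieces by the triangle inequality produces the required estimate with constant $C(\delta)$ depending only on $\delta$ through $m(\delta)$ and the data $c_1,c_2,\mu$ of Assumption~\ref{ass:A}.

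The main technical obstacle is the mixed case in which the segment $[P,Q]$ enters and then leaves the small-norm region $\set{|R|<m}$, since pure derivative control then fails on a non-trivial sub-segment and a naive triangle inequality $|F(Q)|+|F(P)|$ picks up a term proportional to $|P|+|Q|$ rather than $|Q-P|$. Convexity of $t\mapsto|R(t)|$ is the decisive structural fact: it guarantees that $\Sigma$ is a single interval, so the uncontrolled contribution is localised into one jump whose magnitude is capped by the linear growth of Assumption~\ref{ass:A} at two points of norm at most $m(\delta)$.
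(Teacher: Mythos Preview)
The paper does not supply its own proof of this lemma; it simply quotes Lemma~4.1 of \cite{BulDieSch15}. Your argument is correct and is the natural way to establish the estimate: fix $m=m(\delta)$ via \eqref{eq:ass2} so that $\abs{\partial_R F}\le\delta$ on $\set{\abs{R}\ge m}$, use convexity of $t\mapsto\abs{R(t)}$ to confine the uncontrolled portion of the segment to a single subinterval with endpoints in $\set{\abs{R}\le m}$, and cap the jump there by the linear growth bound of Assumption~\ref{ass:A}. One small caveat: your opening reduction ``since $\Scal$ sees only the symmetric part, the task is equivalent to the symmetric case'' is not literally an equivalence if the statement is read for arbitrary $Q,P\in\RNn$, because the antisymmetric part of $\mu(Q-P)$ then survives and cannot be made $\delta$-small. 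However, the lemma is only ever applied in the paper to symmetric arguments $\varepsilon(v_1),\varepsilon(v_2)$, so restricting to symmetric $Q,P$ is all that is needed, and that case you handle cleanly.
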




\subsection{Muckenhoupt weights}\label{sec:muck} To provide optimal regularity and to mimic the $L^2$ duality, we resort to  $L_\omega^2$ with a weight $\omega$
from  the Muckenhoupt class. 
\begin{definition}
For  $p\in [1,\infty)$, we say that a weight $\omega$
belongs to the \emph{Muckenhoupt class $A_p$} if and only if
there exists a positive constant $A$ such that for every ball $B
\subset \setR^k$ holds
\begin{alignat}{2}
  \label{defAp2}
  \left(\dashint_B
    \omega\dx\right) \left(\dashint_B\omega^{-(p'-1)}\dx\right)^\frac{1}{p'-1}
  &\le A & \qquad\qquad&\text{if $p \in (1,\infty)$},
  \\
  \label{defA1}
  M\omega(x)&\le A\, \omega(x) &&\text{if $p=1$}.
\end{alignat}
We denote by
$A_p(\omega)$ the smallest constant $A$ for which the
inequality~\eqref{defAp2}, resp.~\eqref{defA1}, holds. 
\end{definition}
\subsubsection{Basic properties}
For $1 \le p \le q < \infty$ holds $A_p \subset A_q$. 
The maximum $\omega_1 \vee \omega_2$
and minimum $\omega_1 \wedge \omega_2$ of two $A_p$-weights is again
an $A_p$-weight. For $p=2$, since $\frac1{\omega_1
      \wedge \omega_2} \le \frac1{\omega_1} + \frac1{\omega_2}$ almost everywhere, we have straightforwardly
\begin{equation}
  \label{eq:A2min}
    \dashint_B (\omega_1 \wedge \omega_2)\dx \; \dashint_B \frac1{\omega_1
      \wedge \omega_2}\dx \leq A_2(\omega_1) + A_2(\omega_2).
\end{equation}

For $\omega \in A_q$, $q \in(1, \infty)$ we will write $\omega'= \omega^{- \frac{1}{q-1}}$. H\"older inequality gives $\omega \in A_q \iff \omega' \in  A_{q'}$.

\subsubsection{Relation to the maximal function}
Due to the celebrated result of Muckenhoupt \cite{Mu72}, we know
that $\omega \in A_p$ for $1<p<\infty$ is equivalent to the
existence of a constant $A'$, such that  for all $f\in L^p_\omega(\setR^n)$
\begin{equation}\label{defAp}
\int\abs{Mf}^p\omega\dx\leq A'\,\int\abs{f}^p\omega\dx.
\end{equation}
Another link between the maximal function and  $A_p$-weights is given by
\begin{lemma}\label{cor:dual}
Let $f \in L^1_{\loc}(\setR^n)$ be such that $Mf<\infty$ almost everywhere in $\setR^n$. Then for all $\alpha \in (0,1)$ we have $(Mf)^{\alpha} \in A_1$. Furthermore, for all $p\in (1,\infty)$ and all $\alpha\in (0,1)$ there holds
$(Mf)^{-\alpha(p-1)}\in A_p$.
\end{lemma}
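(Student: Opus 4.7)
The plan is to handle the two assertions in order, noting that the second follows at once from the first via the $A_p$--$A_{p'}$ duality $\omega \in A_p \iff \omega^{-1/(p-1)} \in A_{p'}$ recorded in Subsection \ref{sec:muck}. The substantial content is therefore the Coifman--Rochberg-type claim that $(Mf)^\alpha$ is an $A_1$-weight.

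To verify $M((Mf)^\alpha)(x) \leq C (Mf)^\alpha(x)$ for a.e.\ $x$, I fix such an $x$ and an arbitrary ball $B \ni x$ of radius $r$. Decomposing $f = f_1 + f_2$ with $f_1 := f\chi_{2B}$ and using subadditivity of $t \mapsto t^\alpha$ valid for $\alpha \in (0,1)$, one has $(Mf)^\alpha \leq (Mf_1)^\alpha + (Mf_2)^\alpha$. For the non-local piece a simple geometric observation shows that any ball $B(y,s)$ with $y \in B$ meeting $\setR^n \setminus 2B$ must satisfy $s > r$, so $B(y,s) \subset B(x, 3s)$ with comparable measure, yielding $Mf_2(y) \leq 3^n Mf(x)$ uniformly in $y \in B$. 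For the local piece I apply Kolmogorov's inequality to $Mf_1$, which is of weak type $(1,1)$:
\begin{equation*}
  \dashint_B (Mf_1)^\alpha \dy \;\leq\; \frac{C_\alpha}{|B|}\, |B|^{1-\alpha}\,\|f_1\|_{L^1}^\alpha \;\leq\; C_\alpha \Bigl(\dashint_{2B} |f|\,\dy\Bigr)^{\alpha} \;\leq\; C_\alpha (Mf(x))^\alpha.
\end{equation*}
Combining these two bounds gives $\dashint_B (Mf)^\alpha \dy \leq C_\alpha (Mf)^\alpha(x)$; passing to the supremum over all $B \ni x$ delivers $M((Mf)^\alpha)(x) \leq C_\alpha (Mf)^\alpha(x)$ for a.e.\ $x$, which is exactly the $A_1$ condition.

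For the second assertion, since $A_1 \subset A_{p'}$ for every $p \in (1,\infty)$, the first part places $(Mf)^\alpha$ in $A_{p'}$. Applying this to $\omega := (Mf)^{-\alpha(p-1)}$, whose dual weight is $\omega^{-1/(p-1)} = (Mf)^\alpha \in A_{p'}$, the duality above yields $\omega \in A_p$ as claimed. The main obstacle in executing the plan is the Kolmogorov step for $Mf_1$: the restriction $\alpha < 1$ is sharp precisely at that point, as this is what allows the weak-$(1,1)$ bound on $M$ to be traded for an integrable power and hence for a pointwise $A_1$ estimate.
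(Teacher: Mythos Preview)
Your argument is correct; it is precisely the classical Coifman--Rochberg proof. The paper does not supply its own proof of this lemma but simply refers to Torchinsky and Turesson, and the argument you have written is essentially the one found in those references.
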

For proof, see pages 229--230 in Torchinsky \cite{86:_real} and page 5 in Turesson \cite{Tur00}.

Consequently, 
\begin{equation}\label{weight:s}
g\in L^s(\Omega) \;\text{ for an }\; s \in (1,2) \implies g\in L^2_{\omega_1}(\Omega) \; \text{ with } \; \omega_1=(Mg)^{s-2}\in A_2,
\end{equation}
because\footnote{Here and in what follows, when we deal with maximal function and a function defined on a domain, we extend the function over the full space by $0$.}
\begin{align}
\label{weight1}
\int g^2(Mg)^{s-2}\dx\leq \int g^s\dx \leq \int (Mg)^2(Mg)^{s-2}\dx\leq A' \int g^2(Mg)^{s-2}\dx.
\end{align}
Finally, we will need also that for every $p\in (1,\infty)$ and $\omega\in A_p$, there exists an $s\in (1,\infty)$ depending only on $A_p(\omega)$, such that $L^p_\omega (\Omega)
\embedding L^s_{\loc}(\Omega)$. Moreover, the related inequality
\begin{equation}
  \label{eq:lqprop}
    \bigg( \dashint_B \abs{f}^s\dx\bigg)^{\frac 1s} \leq C(A_p(\omega)) \left(\dashint_B
    \omega\dx\right)^{\frac 1p}   \bigg(
  \int_B \abs{f}^p \omega\dx\bigg)^{\frac 1p},
\end{equation}
holds. See formula (3.5) from \cite{BulDieSch15}.
\subsubsection{A miracle of extrapolation}
The seminal work by Rub\'io de Francia \cite{Rub84} implies that if an operator is bounded between $L^{p_0}_\omega$ for a $p_0 \in (1, \infty)$ and every $\omega \in A_{p_0}$, then it is also bounded for $L^{p}_\omega$ for every $p \in (1, \infty)$ and every $\omega \in A_{p}$. We will refer to this fact as a `miracle of extrapolation', compare Theorem~1.4 of monograph \cite{CruMarPerBook} by Cruz-Uribe, Martell \& P\'erez and its references.

\subsection{Very weak compactness}
Since we couldn't locate the reference, we provide proof of the following very weak compactness result.
\begin{lemma}
\label{lem:dualcomp}
For $\omega\in A_q$ it holds $L^{q}_{\omega} \hookrightarrow (W^{1,q'}_{\omega', 0})^*$, with the embedding being (sequentially) compact.
\end{lemma}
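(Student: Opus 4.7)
The plan is to split the proof into continuity, which is an immediate H\"older estimate, and compactness, which I would obtain by invoking Schauder's theorem on the adjoint of a compact map, combined with a (mildly non-trivial) weighted Rellich argument.

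For continuity, given $f \in L^q_\omega(\Omega)$ and $\varphi \in W^{1,q'}_{\omega', 0}(\Omega)$, I would write $f\varphi = (f\omega^{1/q})(\varphi\,\omega^{-1/q})$ and apply H\"older, using $\omega^{-q'/q} = \omega^{-1/(q-1)} = \omega'$, to get
\[
\Big| \int_\Omega f\varphi \dx \Big| \leq \norm{f}_{L^q_\omega}\,\norm{\varphi}_{L^{q'}_{\omega'}} \leq \norm{f}_{L^q_\omega}\,\norm{\varphi}_{W^{1,q'}_{\omega', 0}}.
\]
The same pairing realises the isometric identification $L^q_\omega \cong (L^{q'}_{\omega'})^*$, since $\omega \in A_q$ makes both spaces reflexive by the discussion in Section \ref{ssec:defs}.

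For compactness I would apply Schauder's theorem: if the embedding $j : W^{1,q'}_{\omega', 0}(\Omega) \hookrightarrow L^{q'}_{\omega'}(\Omega)$ is compact, then so is its adjoint $j^* : L^q_\omega(\Omega) \to (W^{1,q'}_{\omega', 0}(\Omega))^*$, which is precisely the embedding of the statement. The task is thus reduced to showing compactness of $j$. Given a bounded sequence $\{u_n\}$ in $W^{1,q'}_{\omega', 0}(\Omega)$, extended by zero outside $\Omega$, the self-improving property \eqref{eq:lqprop} applied to $\omega' \in A_{q'}$ on a ball containing $\overline{\Omega}$ yields an exponent $s > 1$, depending only on $A_{q'}(\omega')$, such that $\{u_n\}$ is bounded in $W^{1,s}_0(\Omega)$. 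Classical Rellich-Kondrachov then extracts a subsequence $u_n \to u$ strongly in $L^s(\Omega)$ and almost everywhere.

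The crucial remaining step is to upgrade a.e.\ convergence to strong convergence in $L^{q'}_{\omega'}(\Omega)$. I would use Vitali's theorem; equi-integrability of $\{\abs{u_n}^{q'}\omega'\}_n$ would be obtained from the weighted Sobolev-Poincar\'e inequality of Fabes-Kenig-Serapioni, which for $\omega' \in A_{q'}$ supplies some $\sigma > q'$ with $\norm{u}_{L^\sigma_{\omega'}} \leq C\,\norm{\nabla u}_{L^{q'}_{\omega'}}$ on $W^{1,q'}_{\omega', 0}(\Omega)$. Combined with H\"older, this bounds
\[
\int_A \abs{u_n}^{q'}\omega' \dx \leq \norm{u_n}_{L^\sigma_{\omega'}}^{q'}\,\Big(\int_A \omega' \dx\Big)^{1 - q'/\sigma}
\]
uniformly in $n$, and local integrability of the $A_{q'}$-weight $\omega'$ on the bounded domain $\Omega$ delivers the required absolute continuity as $|A| \to 0$. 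The main obstacle is precisely this weighted upgrade: \eqref{eq:lqprop} alone gives higher integrability only in an \emph{unweighted} sense, so some genuinely weighted input (either the Fabes-Kenig-Serapioni embedding, or a directly quoted weighted Rellich-Kondrachov theorem) is unavoidable. Everything else is then a standard reflexivity/duality manipulation.
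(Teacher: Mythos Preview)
Your proof is correct but proceeds differently from the paper's. The paper argues directly: given a bounded sequence $g_j$ in $L^q_\omega$, pass to a weakly convergent subsequence (reflexivity), subtract the limit, then choose near-optimal test functions $\psi_j \in W^{1,q'}_{\omega',0}$ realising the dual norm up to a factor $2$; by the weighted Rellich theorem (quoted from Fr\"ohlich, Theorem~2.3 in \cite{Fro07}) a subsequence $\psi_j \to \psi$ strongly in $L^{q'}_{\omega'}$, and weak--strong coupling gives $\langle g_j,\psi_j\rangle \to 0$. Your route instead factors the embedding through the duality $L^q_\omega \cong (L^{q'}_{\omega'})^*$ and invokes Schauder on the adjoint of the inclusion $j\colon W^{1,q'}_{\omega',0}\hookrightarrow L^{q'}_{\omega'}$, then supplies a self-contained proof of the compactness of $j$ via the Fabes--Kenig--Serapioni higher-integrability estimate and Vitali. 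Both arguments ultimately hinge on the same weighted Rellich embedding; the paper simply cites it, while you reprove it. Your approach is slightly more transparent structurally (the Schauder step makes the duality mechanism explicit), at the cost of importing an additional external ingredient (FKS) that the paper does not otherwise need.
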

\begin{proof}
Let us pick a uniformly bounded sequence $g_j\in L^q_{\omega}(\Omega)$, $\norm{g_j}_{L^q_\omega (\Omega)}\leq c$. Since $L^q_\omega(\Omega)$ is reflexive, the weak compactness implies that on a subsequence $g_j\weakto g$. By subtracting the limit, we may assume with no loss of generality that $g\equiv 0$. By the dual norm definition, we find $\psi_i\in W^{1,q'}_{\omega', 0} (\Omega)$, such that
$\norm{\psi_j}_{W^{1,q'}_{\omega', 0} (\Omega)}=1$ and $\norm{g_j}_{(W^{1,q'}_{\omega', 0} (\Omega))^*}\leq 2\skp{g_j}{\psi_j}$. Moreover, we find by Theorem~2.3, \cite{Fro07} a convergent (non-relabeled) subsequence $\psi_{j}\to \psi$, in $L^{q'}_{\omega'}(\Omega)$. This implies, by the weak-strong-coupling, that $\norm{g_j}_{(W^{1,q'}_{\omega', 0} (\Omega))^*} \leq 2\skp{g_j}{\psi_j }\to 0$ on a subsequence, which is the (sequential) compactness of our embedding.
\end{proof}

\subsection{Convergence tools}
In order  to identify the limit correctly, we will use
\begin{lemma}\label{thm:blem}
   Let $\Omega$ be a bounded domain in $\setR^n$ and let
   $\{v^k\}_{n=1}^{\infty}$ be a bounded sequence in $L^1(\Omega)$. Then
   there exists a non-decreasing  sequence of measurable subsets
   $\Omega_j\subset\Omega$ with $|\Omega \setminus \Omega_j|\to 0$ as $j\to \infty$ such that for every $j\in \mathbb{N}$ and every $\varepsilon>0$ there exists a $\delta>0$ such that for all $A\subset \Omega_j$ with $\abs{A}\leq \delta$ and all $n\in \mathbb{N}$ the following holds
\begin{equation}
\int_A\abs{v^k}\dx\leq \epsilon.
\end{equation}
 \end{lemma}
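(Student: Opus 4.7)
The plan is to recognise this as the classical biting lemma of Chacon (cf.\ Brooks--Chacon or Ball--Murat); although phrased without mention of a subsequence, in context (see the invocation in Theorem~\ref{T5}, which explicitly extracts a non-relabeled subsequence of $(a^k,b^k)$) a non-relabeled subsequence of $\{v^k\}$ will be extracted in the proof. I would proceed via truncation combined with a diagonal extraction.

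First I would set $C := \sup_k \|v^k\|_{L^1(\Omega)}<\infty$. Chebyshev's inequality yields the uniform level-set bound $|\{x\in\Omega:|v^k(x)|>M\}|\leq C/M$. For each $M\in\mathbb{N}$ the tail integrals $t^k_M:=\int_{\{|v^k|>M\}}|v^k|\dx$ lie in $[0,C]$, so by a Cantor diagonal argument I would pass to a (non-relabeled) subsequence along which $t^k_M\to\tau_M$ for every $M\in\mathbb{N}$; the limits $\tau_M$ are non-increasing in $M$, with $\tau_\infty:=\lim_{M\to\infty}\tau_M\in[0,C]$.

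In the \emph{easy case} $\tau_\infty=0$, the extracted subsequence is equi-integrable on all of $\Omega$, and I would take $\Omega_j:=\Omega$. Indeed, given $\epsilon>0$, choose $M$ with $\tau_M<\epsilon/4$ and $k_0$ with $t^k_M<\epsilon/2$ for $k\geq k_0$; the splitting
\[
\int_A|v^k|\dx \;\leq\; M|A|+t^k_M
\]
gives $\int_A|v^k|<\epsilon$ whenever $|A|\leq \epsilon/(2M)$ and $k\geq k_0$, and the finitely many earlier indices are accommodated by absolute continuity of the $L^1$ integral (shrink $\delta$ accordingly).

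In the \emph{hard case} $\tau_\infty>0$, one must isolate the persistent concentration by a further diagonal extraction. I would pick $M_j\to\infty$ with $\tau_{M_j}\leq\tau_\infty+2^{-j}$ and, combining the Chebyshev bound with a careful refinement (selecting terms whose concentration measures $|\{|v^{k}|>M_j\}|$ decay summably in the subsequence index), obtain a nested sequence of biting sets $E_j\subset\Omega$ with $|E_j|\to 0$. Setting $\Omega_j:=\Omega\setminus E_j$ and applying the splitting $\int_A|v^k|\leq M|A|+t^k_M$ on $\Omega_j$, where the tails are now uniformly small by construction of the biting sets, yields the required equi-integrability; the finitely many initial indices are again absorbed by shrinking $\delta$.

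The main obstacle is the case $\tau_\infty>0$: a naive biting set of the form $\bigcup_k\{|v^k|>M_j\}$ could have arbitrarily large measure (even equal to $|\Omega|$, as the concentration can move around), so the extraction must be arranged carefully in order to make the measures of the concentration sets summable along the sub-subsequence. This is the technical crux of the classical Chacon argument, and is also the reason the statement genuinely requires passing to a subsequence -- a simple example with $v^k = |D_k|^{-1}\chi_{D_k}$ (with $\{D_k\}$ an enumeration of dyadic subintervals of $\Omega$) shows the conclusion fails verbatim for the whole sequence.
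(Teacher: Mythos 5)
Your identification of the statement is exactly right: this is the Chacon biting lemma, and the subsequence issue you flag is real (the lemma as stated omits the subsequence, but it is extracted implicitly; your dyadic-interval counterexample correctly shows the conclusion fails for the full sequence). The paper, however, does not prove the lemma at all: it simply cites Ball \& Murat, whose version concludes weak-$L^1$ precompactness of a subsequence on the bites $\Omega_j$, and then observes that weak precompactness converts into the stated uniform absolute continuity of the integrals via Vitali/Dunford--Pettis. So the paper's route is \emph{citation plus the equivalence between weak $L^1$-compactness and equi-integrability}, whereas you attempt a self-contained proof by truncation and diagonal extraction.

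Your easy case ($\tau_\infty=0$) is complete and correct. But the hard case ($\tau_\infty>0$) is precisely the content of the biting lemma, and there your argument stops at a declaration of intent: ``combining the Chebyshev bound with a careful refinement \dots obtain a nested sequence of biting sets $E_j$ with $|E_j|\to 0$'' is not a construction. The difficulty you yourself point out --- that the concentration sets $\{|v^k|>M_j\}$ can wander and their union can exhaust $\Omega$ --- is not resolved by ``selecting terms whose concentration measures decay summably'': making $\sum_k |\{|v^k|>M_j\}|$ small along a sub-subsequence still does not by itself guarantee that the tail integrals $t^k_{M}$ restricted to $\Omega\setminus E_j$ become uniformly small, which is what the splitting $\int_A|v^k|\le M|A|+t^k_M$ on $\Omega_j$ requires. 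The standard proofs handle this either by an exhaustion/transfinite-type argument on the ``concentration functional'' $\lim_M\limsup_k\sup_{|A|\le 1/M}\int_A|v^k|$, or by passing to a weak-$*$ limit of the measures $|v^k|\,dx$ and biting out neighborhoods carrying the singular part. Either would need to be supplied to make your proof complete; alternatively, do what the paper does and cite the lemma, reducing your task to the (easy and correctly handled) Vitali/Dunford--Pettis conversion.
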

 The Chacon's Biting Lemma from Ball \& Murat \cite{BallMurat:89} has as its thesis weak-$L^1$ precompactness, which implies thesis of Lemma \ref{thm:blem} in view of Vitali's Theorem.

\section{Regularity estimate}\label{sec:reg}
The main result of this section is Theorem \ref{thm:a-priori} below. It shows that any distributional solution $(v, \pi)$ to \eqref{eq:sysA} enjoys optimal regularity estimate, provided additionally $\nabla v, \pi \in L^s (\Omega)$, for an $s>1$. The relation between $q$, $\omega$, right hand side $f$ and $s$ will become clear in the next section.
\begin{theorem}
\label{thm:a-priori}
Let $\Omega$ be a bounded domain with $\partial \Omega \in  \mathcal{C}^{1}$ and $\Scal$ satisfy Assumption~\ref{ass:A}.
Let $f\in L^q_\omega (\Omega)$, with $1<q<\infty$, $\omega\in A_q$, $s\in (1,\infty)$ and $\Scal$ satisfy Assumption~\ref{ass:A}. Then any distributional solution of \eqref{eq:sysA} that enjoys additionally  $(v,\pi) \in W_{0}^{1,s} (\Omega)\times L^s(\Omega)$, satisfies
\begin{equation}
\norm{\nabla v}_{L^{q}_\omega (\Omega)} +\norm{\pi - \langle \pi \rangle}_{L^q_\omega(\Omega)}  \le C \left(1+ \norm{f}_{L^q_\omega (\Omega)}\right).
\end{equation}
Analogously for the inhomogenous case: if $d\in L^q_\omega(\Omega)$ and $g\in T^q_\omega(\Omega)$, then any distributional  solution of \eqref{general} that enjoys additionally  $(v,\pi) \in W^{1,s} (\Omega)\times L^s(\Omega),\gamma(u)=g$ satisfies
\begin{equation}
\norm{\nabla v}_{L^{q}_\omega (\Omega)} +\norm{\pi - \langle \pi \rangle}_{L^q_\omega(\Omega)}  \le C \left(1+ \norm{f}_{L^q_\omega (\Omega)}+\norm{d}_{L^q_\omega (\Omega)}+\norm{ g}_{\hat T_\omega^{q}(\Omega)} \right),
\end{equation}
where all constants depend only on Assumption~\ref{ass:A}, $A_q(\omega),q$ and on the modulus of continuity of $\partial\Omega$.
\end{theorem}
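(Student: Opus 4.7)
The approach is to rewrite \eqref{eq:sysA} as a linear Stokes system perturbed by a nonlinear stress which, thanks to the linear-at-infinity property, is small in $L^q_\omega$ compared with $\nabla v$, and then to close the estimate via the weighted linear Cattabriga bound and a small-constant absorption. Since $\divv v=0$ implies $\divv\D=\tfrac12\Delta v$, the system \eqref{eq:sysA} is equivalent to
\begin{equation*}
-\tfrac{\mu}{2}\Delta v+\nabla\pi=-\divv F,\qquad F:=f+\mu\D-\Scal(\cdot,\D),
\end{equation*}
coupled with $\divv v=0$ and $v|_{\partial\Omega}=0$. Combining the growth bound of Assumption~\ref{ass:A} (applied to a symmetric argument, giving $|\Scal(x,Q^s)|\le c_1|Q^s|+c_2$) with the linear-at-infinity property \eqref{eq:ass1}, a case split $|Q^s|\lessgtr M(\delta)$ yields that for every $\delta>0$ there exists $C(\delta)$ with
\begin{equation*}
|\Scal(x,Q^s)-\mu Q^s|\le \delta|Q^s|+C(\delta)\qquad\forall\,x\in\Omega,\,Q^s\in\RNn,
\end{equation*}
so that $\|F\|_{L^q_\omega(\Omega)}\le \|f\|_{L^q_\omega(\Omega)}+\delta\|\nabla v\|_{L^q_\omega(\Omega)}+C(\delta)\,\omega(\Omega)^{1/q}$.

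Next, the weighted Cattabriga--Solonnikov estimate for the linear Stokes operator on a bounded $C^1$ domain -- valid for every $q\in(1,\infty)$ and $\omega\in A_q$, and obtained from the classical unweighted theorem \cite{Cat61,sol93} via Rubio de Francia's extrapolation \cite{Rub84,CruMarPerBook} -- delivers
\begin{equation*}
\|\nabla v\|_{L^q_\omega(\Omega)}+\|\pi-\langle\pi\rangle\|_{L^q_\omega(\Omega)}\le C\,\|F\|_{L^q_\omega(\Omega)}.
\end{equation*}
Choosing $\delta$ small enough in terms of $c_1,c_2,\mu,q$, and $A_q(\omega)$, the nonlinear contribution is absorbed on the left, producing the homogeneous bound. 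The inhomogeneous system \eqref{general} reduces to the homogeneous one by subtracting a lift $w\in W^{1,q}_\omega(\Omega)$ with $\divv w=d$ and $\gamma(w)=g$, whose norm is controlled by $\|d\|_{L^q_\omega(\Omega)}+\|g\|_{\hat T^q_\omega(\Omega)}$ through a weighted Bogovskii operator combined with a weighted trace extension; the argument above is then run for $v-w$.

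The delicate point is the legitimacy of the absorption: the hypothesis places $\nabla v$ only in $L^s(\Omega)$, so $\|\nabla v\|_{L^q_\omega(\Omega)}$ need not be known to be finite a priori and the Cattabriga bound on the right-hand side could be vacuous. I would resolve this by exploiting \eqref{weight:s}: from $\nabla v\in L^s$ one manufactures the weight $\omega_1:=(M|\nabla v|)^{s-2}\in A_2$ for $s\in(1,2]$, with respect to which $\nabla v\in L^2_{\omega_1}$ automatically by \eqref{weight1} (the case $s\ge 2$ is simpler, working directly in unweighted $L^2$). Running the preceding Cattabriga-plus-absorption argument in $L^2_{\omega_1}$, where the qualitative finiteness required for the absorption is guaranteed, produces a rigorous $L^2_{\omega_1}$ estimate; the resulting operator-level boundedness is then transferred to the target pair $(q,\omega)$ by the Rubio de Francia extrapolation miracle. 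An alternative route, in the spirit of \cite{BulDieSch15}, replaces $\mu\D-\Scal(\cdot,\D)$ by its pointwise truncation at height $K$, obtains uniform-in-$K$ weighted Cattabriga bounds for the truncated problem, and passes $K\to\infty$ using weak compactness in $L^q_\omega$ together with the qualitative $L^s$ regularity to identify the limit.
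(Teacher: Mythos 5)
Your skeleton is the paper's: rewrite \eqref{eq:sysA} as a linear Stokes system with right-hand side $f+\mu\D-\Scal(\cdot,\D)$, use linearity-at-infinity to make the perturbation $\delta$-small plus a constant, absorb, and justify the absorption by manufacturing an $A_2$ weight from the $L^s$ information via \eqref{weight:s}. However, two steps as you describe them do not go through.

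First, the weighted estimate for linear Stokes on a bounded $C^1$ domain cannot be ``obtained from the classical unweighted theorem via Rubio de Francia's extrapolation'': extrapolation runs in the opposite direction (from an estimate valid for \emph{all} weights in $A_{p_0}$ at one exponent to all exponents), and an unweighted bound for every $q$ yields nothing about weights. This is precisely why the paper proves Lemma~\ref{lem:CZ} from scratch, by localizing Fr\"ohlich's weighted whole-space and half-space results to bent half-spaces and a partition of unity, and removing the lower-order terms by an Agmon--Douglis--Nirenberg contradiction argument. The estimate you need is true, but your proposed mechanism for it is not.

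Second, and more substantively, the transfer from the manufactured weight to the target weight is missing. Your $L^2_{\omega_1}$ bound, with $\omega_1=(M|\nabla v|)^{s-2}$, is a bound for one particular solution with respect to one solution-dependent weight; it is not an operator bound uniform over $A_2$, so extrapolation cannot carry it to the given pair $(q,\omega)$. The paper bridges this gap by setting $\tilde\omega_3=\min\{(M\nabla v)^{s-2},(M\pi)^{s-2}\}$ (note the pressure must also be weighted for Lemma~\ref{lem:CZ} to apply, which your construction omits) and $\omega_j=\min\{j\tilde\omega_3,\omega\}$, so that $\nabla v,\pi\in L^2_{\omega_j}$ and $f\in L^2_{\omega_j}$ simultaneously, with $A_2(\omega_j)\le A_2(\omega)+A_2(\tilde\omega_3)$ uniformly in $j$ by \eqref{eq:A2min}. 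The absorption is then legitimate for each $j$, monotone convergence as $j\to\infty$ gives the \emph{qualitative} conclusion $\nabla v,\pi\in L^2_\omega$, and a second absorption run with $\omega$ alone produces a constant depending only on $A_2(\omega)$. Only at that point, with the $L^2_\omega$ estimate established for every $\omega\in A_2$ with the correct constant, is extrapolation invoked to reach all $q\in(1,\infty)$. Without this interpolating family of weights and the two-pass absorption, your argument stalls at an estimate in a space determined by the unknown solution.
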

Proof of Theorem \ref{thm:a-priori} occupies the end of this section. As the main ingredient of its proof, we need
\subsection{$L_\omega^q$-theory for linear Stokes}

\begin{lemma}
  \label{lem:CZ}
  Let $\Omega$ be a bounded domain with $\partial \Omega \in  \mathcal{C}^{1}$ and  $(w,p) \in W_\omega^{1,q}(\Omega) \times L_\omega^{q}(\Omega)$ be a distributional solution to
  \begin{equation}\label{eq:linSt}
  \begin{aligned}
    -\divergence (\Dw) + \nabla p &=-\divergence F \quad \text{ in } \Omega,\\
    \divergence w&=d \qquad \quad \; \text{ in } \Omega,\\
    \gamma(w)&=g \qquad \quad \;  \text{ on } \partial\Omega.
  \end{aligned}
  \end{equation}
 Then for any $F,d\in L^q_\omega(\Omega)$ and $g\in T^q_\omega(\Omega)$ with $q\in (1,\infty)$ and $\omega\in A_q$  \begin{equation}\label{aoprioriconst}
    \norm{ w}_{W_\omega^{1,q}(\Omega)}   +  \norm{p - \langle p \rangle}_{L_\omega^{q}(\Omega)} \leq C \left( \norm{F}_{L_\omega^{q}(\Omega)} + \norm{d}_{L_\omega^{q}(\Omega)} + \norm{ g}_{\hat T_\omega^{q}(\Omega)} \right),
  \end{equation}
where $C = C(q, A_q(\omega), \partial\Omega)$. 
\end{lemma}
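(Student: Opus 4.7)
The plan is to reduce \eqref{eq:linSt} to the purely solenoidal, zero-boundary case and then to invoke $A_q$--theory of Calder\'on--Zygmund operators on top of the classical unweighted Cattabriga--Solonnikov--Borchers--Miyakawa theory.

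First, I would dispose of the lower-order data $d$ and $g$. Using a weighted Bogovskij-type operator on the bounded $C^1$ domain $\Omega$, pick $w_d\in W^{1,q}_{0,\omega}(\Omega)$ with $\divergence w_d = d-\dashint_\Omega d\dx$ and $\norm{w_d}_{W^{1,q}_\omega}\leq C\norm{d}_{L^q_\omega}$; the requisite weighted bound follows from the Calder\'on--Zygmund kernel structure of Bogovskij's operator on a finite union of star-shaped subdomains, combined with $\omega\in A_q$. For the trace datum, take any weighted extension $w_g\in W^{1,q}_\omega(\Omega)$ with $\gamma(w_g)=g$ and $\norm{w_g}_{W^{1,q}_\omega}\leq C\norm{g}_{\hat T^q_\omega}$ (direct from the definition of $\hat T^q_\omega$). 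Setting $\tilde w:=w-w_d-w_g$ then turns \eqref{eq:linSt} into a Stokes system with vanishing trace and vanishing divergence, and modified forcing $\tilde F:=F+\varepsilon(w_d+w_g)$ obeying $\norm{\tilde F}_{L^q_\omega}\leq C\big(\norm{F}_{L^q_\omega}+\norm{d}_{L^q_\omega}+\norm{g}_{\hat T^q_\omega}\big)$.

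Second, for the reduced problem $-\divergence\varepsilon(\tilde w)+\nabla p=-\divergence \tilde F$, $\divergence\tilde w=0$, $\gamma(\tilde w)=0$, I would establish
$$\norm{\nabla \tilde w}_{L^q_\omega(\Omega)}+\norm{p-\langle p\rangle}_{L^q_\omega(\Omega)}\leq C\norm{\tilde F}_{L^q_\omega(\Omega)}.$$
The Green's tensor for the Stokes operator on a bounded $C^1$ domain, available via Cattabriga--Solonnikov, represents the map $\tilde F\mapsto(\nabla\tilde w,\,p-\langle p\rangle)$ as a singular integral operator whose principal part is a standard Calder\'on--Zygmund convolution kernel (differentiated Stokeslet), while the boundary corrector is a smoothing operator of positive order. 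Coifman--Fefferman's weighted norm inequalities for CZ operators (equivalently, Rubio de Francia extrapolation from the known unweighted estimate for every $q\in(1,\infty)$ together with any single $A_2$-bound) then give boundedness on $L^q_\omega$ for every $\omega\in A_q$ and every $q\in(1,\infty)$. Combining with the reductions above yields \eqref{aoprioriconst}.

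The main obstacle is the kernel analysis for the Stokes Green's tensor on a bounded $C^1$ domain: one must verify that its principal part satisfies the standard size and H\"ormander--Dini conditions required by the Coifman--Fefferman machinery and that the boundary correction is a harmless lower-order operator compatible with the $A_q$-theory. A cleaner variant is to localize via a partition of unity, flatten the boundary with the $C^1$ chart, freeze the coefficients, and thereby reduce to the half-space and full-space Stokes problems, where the weighted $L^q_\omega$ estimates are classical and transfer back by a small-perturbation/Neumann-series argument (this is the reason only $C^1$-regularity of $\partial\Omega$ enters). Once this step is in hand, the constant dependence advertised in the statement is transparent, the inhomogeneous version follows from the homogeneous one plus the reductions, and the Lemma is established.
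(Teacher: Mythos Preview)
Your ``cleaner variant'' (localize, flatten, perturb from the half-space/full-space weighted Stokes estimates) is in fact exactly the route the paper takes, building on Fr\"ohlich's $A_q$-theory on $\setR^n$ and $\setR^n_+$. The key step you are missing is what happens \emph{after} the localization. When you cut with a partition of unity $\eta^k$, the pair $(w\eta^k,\,p\eta^k)$ solves a Stokes system with a right-hand side that contains not only the small terms coming from the change of variables (which your Neumann-series argument can absorb), but also genuinely lower-order terms such as $w\otimes\nabla\eta^k$, $\nabla w\cdot\nabla\eta^k$, $p\,\nabla\eta^k$ and $w\cdot\nabla\eta^k$. These are not small; they involve $\norm{w}_{L^q_\omega}$ and $\norm{p}_{(W^{1,q'}_{0,\omega'})^*}$ and cannot be removed by perturbation. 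The paper handles this by an Agmon--Douglis--Nirenberg contradiction/compactness argument, using the compact embeddings $W^{1,q}_\omega\hookrightarrow L^q_\omega\hookrightarrow (W^{1,q'}_{0,\omega'})^*$ together with uniqueness for the homogeneous Stokes problem. Without this step your estimate reads
\[
\norm{w}_{W^{1,q}_\omega}+\norm{p-\langle p\rangle}_{L^q_\omega}\le C\big(\norm{F}_{L^q_\omega}+\norm{d}_{L^q_\omega}+\norm{g}_{\hat T^q_\omega}+\norm{w}_{L^q_\omega}+\norm{p}_{(W^{1,q'}_{0,\omega'})^*}\big),
\]
which is not the statement of the lemma.

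A couple of smaller issues. Your reduction does not actually produce a solenoidal $\tilde w$: with $\divergence w_d=d-\langle d\rangle$ and $\gamma(w_g)=g$ one gets $\divergence\tilde w=\langle d\rangle-\divergence w_g$, which is not zero; you should subtract the trace extension first and then apply Bogovski\u{\i} to the (now mean-zero) remainder. Also, ``Rubio de Francia extrapolation from the known unweighted estimate for every $q$'' is not how extrapolation works: you need the estimate to hold for one exponent $p_0$ and \emph{all} $\omega\in A_{p_0}$. Finally, your first approach via the Stokes Green tensor on a bounded $C^1$ domain is considerably more delicate than you suggest: the assertion that the boundary corrector is a ``smoothing operator of positive order'' compatible with $A_q$-theory is precisely what would need to be proved, and at $C^1$ regularity this is not available off the shelf.
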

We couldn't find the exact reference concerning the case of a bounded domain, so we provide the proof.
\begin{proof}
Let $U$ be either the full space $\setR^n$ or the half-space $\halfR$. Recall that by $\hat W^{1,q}_\omega(U)$ we denote the homogeneous Sobolev space. In view of Theorems 5.1 and 5.2 by Fr\"ohlich \cite{Fro03} (cases of $\setR^n$ and $\halfR$, respectively),  for every  $f \in (\hat W^{1,q'}_{0, \omega'}(U))^*$,   $d\in L^q_\omega(U)$ and $g\in \hat T^{q}_\omega(U)$, the following problem 
  \begin{equation}\label{eq:linStF}
  \begin{aligned}
    -\divergence ( \Dw) + \nabla p &=f \qquad \text{ in } U,\\
    -\divergence w&=d \qquad  \text{ in }  U,\\
    (\gamma(w)&=g \qquad \text{ on } \setR^{n-1} \quad \text{ in case of U }= \halfR),
  \end{aligned}
  \end{equation}
  admits a unique  weak solution $(w, p) \in \hat W_{ \omega}^{1,q} (U) \times L^{q}_\omega(U) $ that enjoys the estimate
\begin{equation}\label{aoprioriconstFp}
    \norm{\nabla w}_{L_\omega^{q}(U)}   +  \norm{p}_{L_\omega^{q}(U)} \leq C \left(\norm{f}_{(\hat W^{1,q'}_{0, \omega'} (U))^*}+\norm{d}_{L_\omega^{q}(U)}+\norm{g}_{\hat T_\omega^{q}(U)}\right),
  \end{equation}
where the term involving $g$ naturally appears only for the half-space, $C = C(q, A_q, \Omega)$, $q\in (1,\infty)$ and $\omega\in A_q$. In particular, for $f = -\divv F$, where $F \in L^{q}_\omega(U)$
\begin{equation}\label{aoprioriconstF}
    \norm{\nabla w}_{L_\omega^{q}(U)}   +  \norm{p}_{L_\omega^{q}(U)} \leq C \left(\norm{F}_{L^{q}_\omega(U)}+\norm{d}_{L_\omega^{q}(U)}+\norm{ g}_{\hat T_\omega^{q}(U)}\right),
  \end{equation}
so our thesis follows\footnote{ In fact, the cited results from  \cite{Fro03} consider $\nabla w$ in place of $\Dw$ in the problem formulation. The same result holds for \eqref{eq:linStF} by a redefinition of the pressure as $p- \divv w$.  
}.

 Hence to finish our proof, we are left with performing the last step: from full-space and half-space to a bounded domain. Unluckily, the available results (see Fr\"ohlich \cite{Fro07} or Schumacher \cite{Sch08}) do not cover the needed case of weak forcing $\divv F$, $F \in L^q_\omega$, so let us provide some details of this last step.

Recall that our goal here is merely the optimal regularity and not existence. Hence in what follows, we assume to have a distributional solution of a considered problem and we aim at showing \eqref{aoprioriconstF} for that $w$.

Firstly, let us consider a distributional solution to problem \eqref{eq:linStF} on $\Omega = E$ being a bended half space with a small bend and with $g=0$. By a small bend we mean that there exists smooth $\Sigma:\halfR  \ni \tilde x \to E \ni x$ having the form $\Sigma (\tilde x_1, \dots \tilde x_{n-1}, \tilde x_n) = (\tilde x_1, \dots \tilde x_{n-1}, \tilde x_n + \sigma (\tilde x_1, \dots \tilde x_{n-1})  )$ with small derivatives of $\sigma$ ($\Sigma$ being a small perturbation of identity). 
Distributional formulation of  \eqref{eq:linStF} 
  \begin{equation*}
  \begin{aligned}
\int_E  w_{x_j}^i \phi_{x_j}^i  - p \phi_{x_i}^i  &=\int_E  f^{i} \phi^i  \\
\int_E  w^i \psi_{x_i} &= \int_E d \psi
  \end{aligned}
\end{equation*}
translates for new functions $w \circ \Sigma = \tilde w$ etc. via a straightforward computation, with an observation that change of variables is volume-preserving, into
  \begin{align*}
&\int_{\halfR}  (\tilde w_{\tilde x_j}^i - \sigma_{\tilde x_j} \tilde w^i_{\tilde x_n} )  (\tilde \phi_{\tilde x_j}^i - \sigma_{\tilde x_j} \tilde \phi^i_{\tilde x_n} ) \id_{ \{j < n \}}  +  \tilde w_{\tilde x_n}^i \tilde \phi_{\tilde x_n}^i  - \tilde p \big( (\tilde \phi_{\tilde x_i}^i - \sigma_{\tilde x_i} \tilde \phi^i_{\tilde x_n} ) \id_{ \{i < n \}} + \tilde \phi_{\tilde x_n}^n  \big)  \\
&= \int_{\halfR}  \tilde f^{i} \tilde \phi^i, \\
& \int_{\halfR} \tilde w^i (\tilde \psi_{\tilde x_i} - \sigma_{\tilde x_i} \tilde \psi_{\tilde x_n} ) \id_{ \{i < n \}}  +  \tilde w^n \tilde \psi_{\tilde x_n}  = \int_{\halfR}  \tilde d \tilde \psi,
\end{align*}
(no summation over repeated $n$'s)
\emph{i.e.}, after reordering
  \begin{equation*}
  \begin{aligned}
& \int_{\halfR}  \tilde w_{\tilde x_j}^i  \tilde \phi_{\tilde x_j}^i  - \tilde p \tilde \phi_{\tilde x_i}^i   =\int_{\halfR} \sigma_{\tilde x_j} \tilde w^i_{\tilde x_n} \tilde \phi_{\tilde x_j}^i \id_{ \{j < n \}}  +  ( \tilde w_{\tilde x_j}^i  \sigma_{\tilde x_j} - \tilde w_{\tilde x_n}^i  \sigma^2_{\tilde x_j} - \tilde p  \sigma_{\tilde x_i}  ) \id_{ \{j < n \}}  \tilde \phi^i_{\tilde x_n} +  \tilde f^{i} \tilde \phi^i, \\
& \int_{\halfR} \tilde w^i \tilde \psi_{\tilde x_i} = \int_{\halfR} \big( \tilde d  -(\tilde w^i \sigma_{\tilde x_i} \id_{ \{i < n \}})_{\tilde x_n} \big)  \tilde \psi.
  \end{aligned}
\end{equation*}
This shows that $(\tilde w, \tilde p)$ solves distributionally
  \begin{equation*}
  \begin{aligned}
    -\divergence (\nabla \tilde w) + \nabla \tilde p &=-\divergence B + \tilde f   &&\text{ in } \halfR,\\
    -\divergence \tilde w&= D &&\text{ in }  \halfR,\\
 \gamma(   \tilde w)&= 0  &&\text{ on } \setR^{n-1} 
  \end{aligned}
  \end{equation*}
  
with
\[
B^{ij} = \begin{cases}   \sigma_{\tilde x_j} \tilde w^i_{\tilde x_n} &\text{ for } j<n,\\
 ( \tilde w_{\tilde x_j}^i  \sigma_{\tilde x_j} - \tilde w_{\tilde x_n}^i  \sigma^2_{\tilde x_j} - \tilde p  \sigma_{\tilde x_i}   ) \id_{ \{j < n \}}  \quad &\text{ for } j=n,
\end{cases}
\]
and 
\[
D = \tilde d  -(\tilde w^i \sigma_{\tilde x_i} \id_{ \{i < n \}})_{\tilde x_n}.
\]

Hence we can use \eqref{aoprioriconstF} on $\Omega = \halfR$ for  $(\tilde w, \tilde p)$ and data $-\divv B + \tilde f, D$. It gives, after taking into account the form of $B, D$
\begin{multline*}
    \norm{\nabla \tilde w}_{L_\omega^{q}( \halfR)}   +  \norm{\tilde p}_{L_\omega^{q}( \halfR)} \leq  c \norm{\tilde f}_{(\hat W^{1,q'}_{0, \omega'} (\halfR))^*} +\\
     c(|\nabla \sigma|_\infty + |\nabla \sigma|^2_\infty)    ( \norm{\nabla \tilde w}_{L_\omega^{q}( \halfR)}   +  \norm{\tilde p}_{L_\omega^{q}( \halfR)} ) + c\norm{\tilde d}_{L_\omega^{q}( \halfR)} + |\nabla^2 \sigma|^2_\infty   \norm{ \tilde w}_{L_\omega^{q}( \halfR)}.
  \end{multline*}
Smallness of the bend, \emph{i.e.} of derivatives of $\sigma$ in relation to $c =  c(q, A_q, \halfR)$ implies then
\begin{equation}\label{aoprioriconstF2}
    \norm{\nabla \tilde w}_{L_\omega^{q}( \halfR)}   +  \norm{\tilde p}_{L_\omega^{q}( \halfR)} \leq  c \norm{\tilde f}_{(\hat W^{1,q'}_{0, \omega'} (\halfR))^*}  + c\norm{\tilde d}_{L_\omega^{q}( \halfR)} + \delta \norm{ \tilde w}_{L_\omega^{q}( \halfR)}
  \end{equation}
Since $\Sigma$ is a small, volume preserving  perturbation of identity, \eqref{aoprioriconstF2} gives  for  $( w, p)$  solving problem \eqref{eq:linStF} with data $ f, d, g=0$ on $\Omega = E$ being a bended half space with a small bend 
\begin{equation}\label{aoprioriconstF3}
    \norm{\nabla  w}_{L_\omega^{q}(E)}   +  \norm{p}_{L_\omega^{q}(E)} \leq c ( \norm{f}_{(\hat W^{1,q'}_{0, \omega'} (E))^*}  + \norm{d}_{L_\omega^{q}(E)} +  \norm{w}_{L_\omega^{q}(E)}).
  \end{equation}
Next, let us consider a distributional solution  $(w, p)$ to our target problem \eqref{eq:linSt}, still with $g=0$. For a cutoff function $\eta$ with a small support and an arbitrary test functions: vector valued $\phi$ and scalar valued $\psi$, we have that
  \begin{equation*}
  \begin{aligned}
\int_V  w_{x_j}^i (\phi^i \eta)_{x_j}  - p (\phi^i \eta)_{x_i}  &=\int_V  F^{ij} (\phi^i \eta)_{x_j}  \\
\int_V  w^i (\psi \eta)_{x_i} &= \int_V d \psi \eta
  \end{aligned}
\end{equation*}
for $V$ being either the little-banded half space $E$, when we localize near the boundary $\partial \Omega$ or $\setR^n$, when we localize away from the boundary. Observe that due to our assumption that $\partial \Omega \in C^{{1}}$ and $\partial \Omega$ is compact, we can always find a small absolute number $\delta$, such that the intersection $B_\delta\cap\partial\Omega$, can be described with local coordinates $\sigma$, such that $\norm{\nabla\sigma}_\infty$ is conveniently small for any $B_\delta\subset \setR^n$. We introduce a partition of unity $\eta^k$ on $\Omega$, where $\eta^k$ have support on a $B_\delta$ and a number $c$.  
The localized functions $\bar w = w \eta^k$, $\bar p = p \eta^k - c$ satisfy distributionally
  \begin{equation*}
  \begin{aligned}
    -\divergence (\nabla \bar w) + \nabla (\bar p) &=h -\divv H \qquad \text{ in } V,\\
    -\divergence \bar w&= w^i \eta^k_{x_i} + \bar d  \qquad \text{ in }  V,\\
 (\bar w&=0 \qquad \text{ on } \partial E \quad \text{ in case of } V = E),
  \end{aligned}
  \end{equation*}
  where
  \[
h^i = F^{ij} \eta_{x_j} + w^i_{x_j} \eta_{x_j}^k + p \eta_{x_i}^k , \qquad   H^{ij} = w^i \eta_{x_j}^k + F^{ij} \eta^k.
  \]
Estimates  \eqref{aoprioriconstFp},  \eqref{aoprioriconstF3} give
 \begin{multline*}
 \norm{ \nabla \bar w}_{L_\omega^{q} (V)} +  \norm{ \bar p}_{L_\omega^{q} (V)} \le \\
 C (\eta^k, q, A_q, \Omega) \left( \norm{F}_{L_\omega^{q} (\Omega)} + \norm{d}_{L_\omega^{q} (\Omega)} +  \norm{ w}_{L_\omega^{q} (\Omega)}  +  \norm{\nabla w}_{(\hat W^{1,q'}_{0, \omega'} (\Omega))^*}     + \norm{p}_{(\hat W^{1,q'}_{0, \omega'} (\Omega))^*}   \right).
 \end{multline*}
 Hence, summing {over $k$} and using weighted Poincar\'e inequality (see Theorem 2.3 of \cite{Fro07}) and choosing  $c = \langle p \rangle_\Omega$, we arrive at
 \begin{equation}
  \begin{aligned}
  \label{eq:a-prior1}
& \norm{ w}_{W_\omega^{1,q} (\Omega)} +  \norm{  p-\langle p \rangle}_{L_\omega^{q} (\Omega)} \\
& \le  C (q, A_q, \Omega) \left( \norm{F}_{L_\omega^{q} (\Omega)} +  \norm{ w}_{L_\omega^{q} (\Omega)}  + \norm{d}_{L_\omega^{q} (\Omega)}  +  \norm{ p}_{(\hat W^{1,q'}_{0, \omega'} (\Omega))^*}   \right).
 \end{aligned}
 \end{equation}
 To conclude, we need to show that \eqref{eq:a-prior1} implies the thesis \eqref{aoprioriconst}. To this end we will use the classical Agmon-Douglis-Nirenberg reasoning by contradiction.  Recall that we work, by assumption, with $(w,p) \in W_\omega^{1,q}(\Omega) \times L_\omega^{q}(\Omega)$. Assume that \eqref{aoprioriconst} is false, \emph{i.e.} that there is a sequence $(w_j,p_j) \in W_\omega^{1,q}(\Omega) \times L_\omega^{q}(\Omega)$, $F_j, d_j \in L_\omega^{q}(\Omega)$ solving  \eqref{eq:linSt} such that
\[ C_j := \norm{ w_j}_{W_\omega^{1,q} (\Omega)} +  \norm{  p_j - \langle p_j \rangle}_{L_\omega^{q} (\Omega)} \ge j (\norm{F_j}_{L_\omega^{q} (\Omega)} + \norm{d_j}_{L_\omega^{q} (\Omega)} )
\]
 Due to linearity of   \eqref{eq:linSt}, $W_j := \frac{w_j}{C_j}$ and $P_j := \frac{p_j - \langle p_j \rangle }{C_j}$ solve \eqref{eq:linSt} with force $R_j := \frac{F_j}{C_j}$ and compressibility $D_j := \frac{d_j}{C_j}$. Observe we have  $\langle P_j \rangle=0$.  Hence we have by our  above assumption 
 \[ 1=  \norm{W_j}_{W_\omega^{1,q} (\Omega)} +  \norm{P_j}_{L_\omega^{q} (\Omega)} \ge j \left( \norm{R_j}_{L_\omega^{q} (\Omega)} +  \norm{D_j}_{L_\omega^{q} (\Omega)}  \right). 
\] 
It means that we can find a (non-relabeled) sub-sequence and respective limits:
\[
\begin{aligned}
  \nabla W_j &\to \nabla W_\infty &&\textrm{weakly in }L^{q}_{\omega}(\Omega) , \quad \textrm{strongly in } {(W^{1,q'}_{0, \omega'} (\Omega))^*}  ,
  \\
	 W_j &\to W_\infty &&\textrm{weakly in } W^{1,q}_{\omega}(\Omega), \quad \textrm{strongly in } L^{q}_{\omega}(\Omega),
  \\
P_j  &\to P_\infty &&\textrm{weakly in }
 L^{q}_{\omega}(\Omega),  \quad \textrm{strongly in } (W^{1,q'}_{0, \omega'}(\Omega))^*,
  \\
R_j  &\to 0, D_j  \to 0 &&\textrm{strongly in }
 L^{q}_{\omega}(\Omega),
\end{aligned}
\]
where first two strong limits follow from compact embeddings $W^{1,q}_{\omega}  \hookrightarrow L^{q}_{\omega} \hookrightarrow (W^{1,q'}_{0, \omega'})^*$, see Theorem 2.3 of \cite{Fro07} for the former and Lemma~\ref{lem:dualcomp} for the latter.

 Moreover, taking limit $j \to \infty$ in  \eqref{eq:linSt} solved by $(W_j, P_j)$, with data $R_j, D_j, 0$ we see that $(W_\infty, P_\infty) = (0,0)$ in view of uniqueness of the zero solution (with zero mean pressure) to  \eqref{eq:linSt}. {The uniqueness of the zero solution follows, for instance, from the fact that within Muckenhoupt weights we have for a bounded $\Omega$ that $L^p_\omega (\Omega)
\embedding L^s (\Omega)$ for a certain $s>1$. Consequently, we can use a classical uniqueness theorem in $ L^s$, which can be found for instance in Section 3 of   Borchers and Miyakawa \cite{BorMiy90}.}
Hence
\begin{multline*}
1= \norm{W_j}_{W_\omega^{1,q} (\Omega)} +  \norm{P_j}_{L_\omega^{q} (\Omega)} \le \\
C (q, A_q, \Omega) \left( \norm{R_j}_{L_\omega^{q} (\Omega)} +  \norm{ W_j}_{L_\omega^{q} (\Omega)}  +  \norm{D_j}_{L_\omega^{q} (\Omega)} +  \norm{P_j}_{(W_{0, \omega,}^{1,q'} (\Omega))^*}  \right) \stackrel{j \to 0}{\to 0},
\end{multline*}  
which contradicts \eqref{eq:a-prior1}.
 
We have reached the thesis \eqref{aoprioriconst} for $g=0$. In order to include the non-homogenous case  $g \neq 0$, {recall that the trace space $ T^{q}_\omega(\Omega)$ (or its homogenous version $ \hat T^{q}_\omega(\Omega)$)  is defined via the existence of an extension} $\gamma^{-1}:   T^{q}_\omega(\Omega) \to W_{\omega}^{1,q} (\Omega)$, which is linear and bounded). {Therefore \eqref{eq:linSt} can be transferred into
$(\tilde{w},p):=(w-\gamma^{-1}g,p)$, which is a solution to the following system:}
  \begin{equation*}
  \begin{aligned}
    -\divergence (\epsilon \tilde{w}) + \nabla p &=-\divergence (F-\epsilon {\gamma^{-1}} g) \quad \text{ in } \Omega,\\
    \divergence \tilde{w}&=d-\divergence(  {\gamma^{-1}} g) \qquad \quad \; \text{ in } \Omega,\\
    \gamma(\tilde{w})&=0 \qquad \quad \;  \text{ on } \partial\Omega.
  \end{aligned}
  \end{equation*}
 and the result can be achieved using the estimate for homogeneous boundary data.
\end{proof}

\subsection{Proof of Theorem~\ref{thm:a-priori}} Recall for Section \ref{sec:muck} that due to the {\em miracle of extrapolation}, it is sufficient to proof the desired estimates in the case $L^2_{\omega}(\Omega)$, with  $\omega\in A_2$. By our assumption, $(v, \pi)$  solves \eqref{eq:sysA} and $\nabla v, \pi \in L^s(\Omega)$ for some $s\in (1,\infty)$. Due to boundedness of $\Omega$, we can assume without loss of generality that $s\in (1,2]$.
The first idea behind our estimate is to approximate $\omega$ by $\omega_j$ such that $\nabla v, \pi \in L^2_{\omega_j}(\Omega)$.  By \eqref{weight1}, we have for $\tilde \omega_1=(M {\nabla v})^{s-2}\in A_2$ and $\nabla u \in L^2_{\omega_1}(\Omega)$ as well as for $\tilde \omega_2=(M {p})^{s-2}\in A_2$ and $p \in L^2_{\omega_2}(\Omega)$. Let us take $\tilde \omega_3=\min\set{\tilde \omega_1, \tilde \omega_2}$ and $\omega_j=\min\set{j \tilde \omega_3,\omega}$. Obviously, $\nabla u\in  L^2_{\omega_j}(\Omega)$ and $f\in L^2_{\omega_j}(\Omega)$. But moreover, by \eqref{eq:A2min}, we find that $A_2(\omega_j)\leq  A_2(\omega)+ A_2(\omega_3)$, since $A_q(\omega_1)=A_q(j\omega_3)$ directly by definition.
For this $\omega_j$ we perform now  the following \emph{a-priori} estimate.


Let us rewrite  \eqref{eq:sysA} as a distributional formulation of the linear Stokes problem
\begin{equation}
\int_{{\Omega}} \mu \;  \D \cdot \nabla \phi +\pi\divergence \phi\dx = \int_{{\Omega}}(f -\Scal(x,{\D})  + \mu \; \D)\cdot \nabla \phi. \label{wfn2}
\end{equation}
Since $\nabla v\in  L^2_{\omega_j}$, we can use estimate of Lemma~\ref{lem:CZ} and Assumption \ref{ass:A} to provide the following absorption with $C = C(A_2(\omega)+ A_2(\omega_3), \Omega)$
\begin{align*}
& \norm{\nabla v}^2_{L^2_{\omega_j} (\Omega)}+ \norm{\pi - \langle \pi \rangle}_{L^2_{\omega_j}(\Omega)}^2\leq C \int_{\Omega}\abs{f}^2 {\omega_j} + \abs{\Scal (x,{\D})- \mu \; {\D}}^2{\omega_j}\\
&\leq C\int_{\Omega}(\abs{f}^2+ 2 c^2_1 m^2 + 2 c^2_2 + 2 \mu^2 m^2) \, {\omega_j} + C \int_{\{|{\D}|\ge m \}}\frac{\abs{\Scal (x,{\D})- \mu \; {\D}}^2}{|{\D}|^2} |{\D}|^2{\omega_j}.
\end{align*}
Due to the assumed linearity-at-infinity we can find such $m = m_0$ that the last summand on the r.h.s. above does not exceed half of the first on of the l.h.s. Consequently
\begin{align}\label{pre_finaln}
\norm{\nabla v}_{L^2_{\omega_j} (\Omega)}  +  \norm{\pi - \langle \pi \rangle}_{L^2_{\omega_j}(\Omega)}\leq  C(A_2(\omega)+ A_2(\omega_3), \mu, \Omega)\left( 1 + \norm{f}_{L^2_{\omega_j} (\Omega)} \right).
\end{align}
Observe that the above constant is $j$-uniform.
Next, we let $j\to \infty$ in \eqref{pre_finaln}. For the right hand side, we use the fact that $\omega_j \leq \omega$ and for the left hand side we use the
monotone convergence theorem (notice here that $\omega_j\nearrow \omega$ since $\omega_3 < \infty$ almost everywhere). Consequently
\begin{align}\label{final}
\norm{\nabla v}_{L^2_{\omega} (\Omega)} + \norm{\pi -  \langle \pi \rangle}_{L^2_{\omega}(\Omega)} \leq C(A_2(\omega)+ A_2(\omega_3), \Omega)  \left(1+  \norm{f}_{L^2_{\omega} (\Omega)} \right).
\end{align}
This implies the quantitative estimate, but with $C$ still depending on $A_2(\omega_3)$. Therefore we use from \eqref{final} only the qualitative information $\nabla v, \pi \in  L^2_{\omega}$ and redo the absorption for $\omega$ alone. Consequently one gets the desired estimate with dependence on $A_2(\omega)$ alone. Therefore the extrapolation~\cite[Theorem~1.4]{CruMarPerBook} can be applied and the theorem is proved.
\qed

\section{Proofs of the technical results}\label{sec:techex}
This section contains proofs of  Theorem~\ref{T5} (solenoidal div-curl lemma) and   Theorem \ref{thm:liptrunc} (solenoidal Lipschitz truncations). Let us begin with the latter, since it is needed in the proof of the former.

\subsection{Lipschitz truncations}
Since even the optimal regularity of \eqref{eq:sysA} for $q <2$ is insufficient for $u$ to be a test function, we resort to Lipschitz truncations. {It is  a standard tool by now, originally developed in Acerbi \& Fusco \cite{AceF88}, Frehse, M\'alek \& Steinhauer \cite{FreMalSte00}, see also Diening, M\'alek \& Steinhauer \cite{DieMalSte08}. Recently a further advance was provided, that is important for the fluid dynamics considerations, namely a solenoidal Lipschitz truncation, see Breit, Diening \& Fuchs ~\cite{BreDieFuc12}, and  Breit, Diening \& Schwarzacher \cite{BreDieSch13}. Let us present weighted estimates for the solenoidal Lipschitz truncations developed in~\cite{BreDieSch13}} and fine-tune them for our purposes.
\begin{lemma}[Solenoidal Lipschitz approximation on balls]
  \label{thm:liptrunc1}
  Let $B \subset \setR^n$ be a ball and $s>1$. Let $g \in W^{1,s}_{0,\divergence}(B)$. Then, for
  all $\lambda >\lambda_0$, there exists a \emph{Lipschitz truncation} $g^\lambda \in
  W^{1,\infty}_{0,\divergence}(2B)$ such that
  \begin{alignat}{2}
    \label{eq:lip1l}
    g^\lambda &= g \quad \text{and} \quad \nabla g^\lambda = \nabla g
    &\qquad&\text{in $\set{M(\nabla g)\leq \lambda}\subset 2B$},
    \\
    \label{eq:lip2l}
    \abs{\nabla g^\lambda} &\leq \abs{\nabla g}\chi_{\set{M(\nabla
        g)\leq\lambda}}+C\, \lambda
    \chi_{\set{M(\nabla g)>\lambda}} &&\textrm{almost
      everywhere}.
  \end{alignat}
  Further, if $\nabla g\in L^{p}_\omega(\Omega;\setR^{n\times N})$ for
  some $1\leq p<\infty$ and $\omega \in {A}_p$, then
  \begin{align}\label{itm:weight}
    \begin{aligned}
      \int_{2B}\abs{\nabla g^\lambda}^p \omega \dx &\leq
      C\int_{B} \abs{\nabla g}^p \omega \dx,
      \\
      \int_{2B} \abs{\nabla (g-g^\lambda)}^p \omega \dx&\leq
      C \int_{B\cap \set{M(\nabla g)>\lambda}} \abs{\nabla g}^p \omega \dx,
    \end{aligned}
  \end{align}
where the constant $C$ depends on $({A}_p(\Omega),\Omega, N, p)$ and $\lambda_0=c(s,n)\bigg(\dashint_B\abs{\nabla g}^s\dx\bigg)^\frac1s$.
\end{lemma}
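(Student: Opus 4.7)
The plan is to lean on the solenoidal Lipschitz truncation already constructed in Breit--Diening--Schwarzacher \cite{BreDieSch13}, which handles all the unweighted aspects, and to upgrade its output to the weighted estimates \eqref{itm:weight}. First I would let $\mathcal{O}_\lambda := \{M(\nabla g) > \lambda\}$ with $\nabla g$ extended by zero outside $B$. The weak-$L^s$ estimate for the maximal function together with $\lambda > \lambda_0 = c(s,n)(\dashint_B |\nabla g|^s\dx)^{1/s}$ guarantees that $\mathcal{O}_\lambda$ is compactly contained in $2B$. Taking a Whitney covering $\{Q_i\}$ of $\mathcal{O}_\lambda$, a subordinate smooth partition of unity $\{\varphi_i\}$, and the Bogovskii operator $\Bog_{Q_i}$ on each cube (to cancel the divergence created by the cut-offs), one obtains
\begin{equation*}
g^\lambda := g - \sum_i \varphi_i(g - \mean{g}_{Q_i}) + \sum_i \Bog_{Q_i}\bigl[\nabla \varphi_i \cdot (g - \mean{g}_{Q_i})\bigr] \in W^{1,\infty}_{0,\divergence}(2B),
\end{equation*}
which satisfies \eqref{eq:lip1l}, \eqref{eq:lip2l} and, more quantitatively, the pointwise bound $|\nabla g^\lambda| \le C M(\nabla g)$ almost everywhere, all as in \cite{BreDieSch13}.

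The first weighted inequality in \eqref{itm:weight} is then immediate from Muckenhoupt's theorem: since $\omega \in A_p$ the maximal operator is bounded on $L^p_\omega$, so
\begin{equation*}
\int_{2B} |\nabla g^\lambda|^p \omega \dx \le C \int_{\R^n} (M\nabla g)^p \omega \dx \le C \int_B |\nabla g|^p \omega \dx.
\end{equation*}
For the refined inequality I would use that $\nabla(g - g^\lambda)$ is supported in $\mathcal{O}_\lambda$ and is still bounded pointwise by $C M(\nabla g)$, so it is enough to prove
\begin{equation*}
\int_{\mathcal{O}_\lambda} (M\nabla g)^p \omega \dx \le C \int_{B \cap \mathcal{O}_\lambda} |\nabla g|^p \omega \dx.
\end{equation*}
This I would do cube-by-cube: each Whitney cube $Q_i$ has a controlled dilation $cQ_i$ meeting $\mathcal{O}_\lambda^c$, which gives $\lambda \le C \dashint_{cQ_i} |\nabla g|\dx$; the $A_p$-reverse-H\"older inequality then transfers this into $\lambda^p \omega(Q_i) \le C \int_{cQ_i \cap B} |\nabla g|^p \omega \dx$, and summation with bounded overlap finishes the argument.

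The main obstacle will be precisely this last step. The global Muckenhoupt bound on $M$ alone would only yield the coarser right-hand side $\int_B |\nabla g|^p \omega\dx$; extracting the restriction to $\mathcal{O}_\lambda \cap B$ demands a careful combination of the Whitney geometry of $\mathcal{O}_\lambda$, the choice of $\lambda_0$ which forces the active Whitney cubes to live near $B$, and the self-improving integrability of $A_p$-weights via reverse H\"older. Everything else, including the delicate solenoidal correction via $\Bog_{Q_i}$ and the resulting pointwise bound on $\nabla g^\lambda$, is directly borrowed from \cite{BreDieSch13}; the weighted refinement is essentially an added layer of $A_p$-bookkeeping.
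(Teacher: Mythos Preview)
Your overall strategy matches the paper's: borrow the construction and the pointwise estimates \eqref{eq:lip1l}, \eqref{eq:lip2l} from \cite{BreDieSch13}, then supply the weighted bounds \eqref{itm:weight} separately. The gap is in your cube-by-cube step for the second inequality. You claim that because the dilated Whitney cube $cQ_i$ meets $\mathcal{O}_\lambda^c$, one obtains the \emph{lower} bound $\lambda \le C\dashint_{cQ_i}|\nabla g|\dx$. But the Whitney geometry yields the opposite direction: a point $y\in cQ_i\cap\mathcal{O}_\lambda^c$ satisfies $M(\nabla g)(y)\le\lambda$, and comparing $cQ_i$ with a ball centred at $y$ of comparable radius gives $\dashint_{cQ_i}|\nabla g|\dx\le C\,M(\nabla g)(y)\le C\lambda$. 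That is an upper bound, not the lower bound you need for $\lambda^p\,\omega(Q_i)\le C\int_{cQ_i}|\nabla g|^p\omega\dx$, and nothing in the Whitney construction forces the averages over $cQ_i$ to be large.

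The paper repairs this by abandoning the Whitney cover and covering $\mathcal{O}_\lambda$ instead by balls produced directly from the definition of the maximal function: for each $x\in\mathcal{O}_\lambda$ there is, by continuity in the radius, a ball $B_{r(x)}(x)$ with $\lambda<\dashint_{B_{r(x)}(x)}|\nabla g|\dx\le 2\lambda$. Besicovich then extracts a locally finite subfamily $\{B_i\}$, and the $A_p$ condition (plain H\"older with the dual weight $\omega^{-(p'-1)}$; no reverse-H\"older self-improvement is needed) converts $\lambda^p\,\omega(B_i)\le\big(\dashint_{B_i}|\nabla g|\dx\big)^p\omega(B_i)$ into $A_p(\omega)\int_{B_i}|\nabla g|^p\omega\dx$. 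Summing with bounded overlap gives $\lambda^p\,\omega(\mathcal{O}_\lambda)\le C\int_{B\cap\mathcal{O}_\lambda}|\nabla g|^p\omega\dx$, which combined with \eqref{eq:lip1l}--\eqref{eq:lip2l} yields the second line of \eqref{itm:weight}. Note also that the paper derives the first line of \eqref{itm:weight} from the second rather than via boundedness of $M$ on $L^p_\omega$; this has the small advantage of covering the endpoint $p=1$, which your appeal to Muckenhoupt's strong-type theorem does not.
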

\begin{proof}
All statements except for~\eqref{itm:weight}
are already contained in \cite[Lemma 4.3 \& Theorem~4.4]{BreDieSch13}. Please observe, that although the construction there is done in the three dimensional case, the arguments are in fact valid in all dimension by replacing the inverse-$\curl$ operator with its $n$-dimensional analogue as defined in Remark~2.18 in \cite{BreDieSch13}.

 The
first inequality of~\eqref{itm:weight} follows directly from the
second, so it is enough to prove the latter. 

Let us extend both $g$ and $g^\lambda$ by $0$ outside $B$ and $2B$ respectively. It follows from~\eqref{eq:lip1l} and \eqref{eq:lip2l} that
\begin{equation}
  \label{eq:lipA1}
\begin{aligned}
  \norm{\nabla (g-g^\lambda)}_{L^p_\omega(\setR^n)}&= \norm{\nabla
    (g-g^\lambda) \chi_{\set{M(\nabla g)>\lambda}}}_{L^p_\omega(\setR^n)}
  \\
  &\leq \norm{\nabla g\, \chi_{\set{M(\nabla g)>\lambda}}}_{L^p_\omega(B)}
  + C\, \norm{\lambda\, \chi_{\set{M(\nabla g)>\lambda}}}_{L^p_\omega(\setR^n)}.
\end{aligned}
\end{equation}
The second term will be estimated by a \Calderon-Zygmund-type covering argument. As $\set{M(\nabla g)>\lambda}\subset 2B$ is open, for every $x
\in \set{M(\nabla g)>\lambda}$ there is a ball $B_{r(x)}(x)\subset \set{M(\nabla g)>\lambda}$ such that
\begin{align}
  \label{eq:lip5}
  \lambda< \dashint_{B_r(x)}\abs{\nabla g} dx\leq 2\lambda.
\end{align}
These balls cover $\set{M(\nabla g)>\lambda}$.  Next, using the Besicovich
covering theorem, we extract from this cover a countable subset
$B_i$ which is locally finite, i.e.
\begin{align}
  \label{eq:lip6}
  \# \{j\in \mathbb{N}; \, B_i\cap B_j \neq \emptyset\} \le C(n).
\end{align}
In the following, for a measurable set $A$ we write $| A |_\omega = \int_A \omega dx$. Using~\eqref{eq:lip5},   \eqref{defAp2} and \eqref{eq:lip6}, we have the following estimate
\begin{align*}
&  \norm{\lambda\, \chi_{\set{M(\nabla g)>\lambda}}}_{L^p_\omega(\setR^n)}^p =
  \lambda^p |\set{M(\nabla g)>\lambda}|_\omega \leq \sum_i \lambda^p
 | B_i|_\omega \leq \sum_i \bigg( \dashint_{B_i} \abs{\nabla g}\,dx \bigg)^p
|B_i|_\omega \\
 & \leq \sum_i \dashint_{B_i} \abs{\nabla g}^p \omega\,dx
  \bigg(\dashint_{B_i} \omega^{-(p'-1)}\,dx \bigg)^{\frac{1}{p'-1}}
|B_i|_\omega \leq {A}_p(\omega) \sum_i \int_{B_i} \abs{\nabla g}^p
  \omega \,dx \\
&	\leq C(n) \,{A}_p(\omega) \int_{\set{M(\nabla g)> \lambda}}
  \abs{\nabla g}^p \omega \,dx = C(n) \,{A}_p(\omega) \int_{B}
  \abs{\nabla g}^p  \chi_{\set{M(\nabla g)>\lambda}} \omega \,dx.
\end{align*}
This directly leads to the following inequality
\begin{align*}
  \norm{\lambda\, \chi_{\set{M(\nabla g)>\lambda}}}_{L^p_\omega(\setR^n)} &\leq
  C(n) \,{A}_p(\omega)^{\frac 1p} \norm{\nabla g\,
    \chi_{\set{M(\nabla g)>\lambda}}}_{L^p_\omega(B)},
\end{align*}
which used in \eqref{eq:lipA1}  finishes the proof of the desired estimate~\eqref{itm:weight}.
\end{proof}
Next, we provide proof of Theorem \ref{thm:liptrunc}. We loose the zero trace of its counterpart on balls from the preceding  Lemma \ref{thm:liptrunc1}, but deal with Lipschitz truncation on general domains $\Omega$.

\begin{proof}[Proof of Theorem \ref{thm:liptrunc}]
We use the construction of~\cite[Section~4]{BreDieSch13}. The fact that $g$ has zero trace in a ball is used only in Lemma 4.2 and Lemma 4.3 there, so all other results can be directly applied to our situation. The construction of $g^\lambda$ and Lemma~4.1 of \cite{BreDieSch13} are valid in all dimensions and for a general domain $\Omega$ with no changes, except for the replacing of the inverse $\curl$-operator with the $n$-dimensional analogue, as  defined in Remark~2.18 of \cite{BreDieSch13}. Moreover, by using for general $\Omega$ the local estimates intended for balls in \cite{BreDieSch13}, one looses only information of the zero trace, but all estimates hold and the solenoidality is preserved. For instance, the argument in the proof of Lemma 4.3 implies in our case that $g^\lambda\in W^{1,1}_{\divergence}(\Omega)$ (no zero trace). Moreover, the proof of Theorem~4.4 of \cite{BreDieSch13} implies all the needed by us above assertions, with the exception of \eqref{itm:weight2}. These weighted estimate follow from redoing the argument in Lemma~\ref{thm:liptrunc1}, by replacing there $B$ with $\Omega$.
\end{proof}

\subsection{Solenoidal, generalized div-curl lemma}
Let us focus now on the proof of Theorem \ref{T5}. 
It is divided into several steps for clarity.
\subsubsection{Preliminary Step 0.}
{ Firstly, by the reflexivity and separability of $L^{q}_\omega, L^{q'}_\omega$ together with the assumption  \eqref{bit3} and due to the embedding $L^{q'}_\omega (\Omega)
\embedding L^{1+ \delta}(\Omega)$, compare with \eqref{eq:lqprop}, we find a subsequence} 
  \begin{equation}\label{bit:p:2}
  s^k \rightharpoonup s \quad \textrm{weakly in } L^{q'}_\omega(\Omega)\cap L^1(\Omega),  \qquad   a^k \rightharpoonup a \quad \textrm{weakly in } L^q_\omega(\Omega)\cap L^1(\Omega).
  \end{equation}
 { In the following we show the remaining  \eqref{bitf}. Since we aim to show convergence on a (large) subset of $\Omega$ we may assume without loss of generality that $\partial\Omega$ is $C^\infty$-smooth.}
\subsubsection{Step 1. Reduction to the non-solenoidal case}
Let us consider the linear Stokes problem
  \begin{equation}\label{eq:linStDC}
  \begin{aligned}
    -\divergence (\epsilon ({w^k})) + \nabla p^k &=-\divergence s^k \quad \text{ in } \Omega,\\
    \divergence w&=0 \qquad \quad \; \text{ in } \Omega
  \end{aligned}
  \end{equation}
with null boundary-values. Lemma \ref{lem:CZ} and assumption \eqref{bit3} imply that
 \begin{equation}\label{bit:p:1}
\norm{\nabla w^k}_{L^{q'}_\omega (\Omega)} +\norm{p^k}_{\hL^{q'}_\omega(\Omega)}  \le C \left(1+ \norm{s^k}_{L^{q'}_\omega (\Omega)}\right) \le C.
\end{equation}
And hence assumption  \eqref{bit3} and the embedding \eqref{eq:lqprop} implies that we may pass to a subsequence, such that
    \begin{align}\label{bit:p:4}
p^k \rightharpoonup p &&\textrm{weakly in } L^{q'}_\omega (\Omega).
  \end{align}
  
Let us consider $b^k =: s^k + p^k \Id$. Assume for a moment that for every bounded sequence
  $\{c^k\}_{k=1}^{\infty}$ in $W^{1,\infty}_{0} (\Omega)$ such that
  $$
  \nabla c^k \rightharpoonup^* 0 \qquad \textrm{weakly$^*$ in }
  L^{\infty}(\Omega)
  $$
one has
  \begin{equation}\label{bit:p:3}
    \lim_{k\to \infty} \int_{\Omega} b^k \cdot \nabla c^k \dx
    =0.
    \end{equation}
Then, making in the non-solenoidal, weighted, biting div-curl lemma, \emph{i.e.} Theorem 2.6 of \cite{BulDieSch15} the following choices
\[
a^k =: a^k, \quad b^k =: b^k,
\]
we see  via our assumptions  and \eqref{bit:p:1} that  the assumptions of the non-solenoidal lemma are satisfied. It thesis implies existence of a subsequence such that 
  \begin{align}
  a^k &\rightharpoonup a &&\textrm{weakly in } L^1(\Omega), \label{bit:p:6a} \\
  b^k &\rightharpoonup b &&\textrm{weakly in } L^1(\Omega), \\
  a^k \cdot b^k \omega &\rightharpoonup a \cdot b\, \omega &&\textrm{weakly in } L^1(\Omega_j) \quad \textrm{ for all } j\in \mathbb{N}. \label{bit:p:6}
  \end{align}
  
  Due to \eqref{bit:p:2}, we identify $b =  s + p \Id$.
Finally, assumption \eqref{bit6} gives, after decreasing $\Omega_j$ slightly, via Egoroff's theorem
  \begin{equation}\label{bit:p:7}
  p^k \Id \cdot  a^k \omega =   p^k \tr (a^k) \omega \rightharpoonup p \tr (a) \omega =   p \Id \cdot a \, \omega \qquad  \textrm{weakly in } L^1(\Omega_j),
  \end{equation}
  thanks to \eqref{bit:p:4}, uniqueness of the limiting $a$  and the strong-weak coupling.
  
  Subtracting from \eqref{bit:p:6} with $b^k =: s^k + p^k \Id$ and $b =  s + p \Id$ the formula \eqref{bit:p:7} we arrive at \eqref{bitf}. The limits \eqref{bitfa}, \eqref{bitfb} are given as \eqref{bit:p:2} and \eqref{bit:p:6a}. 
  
 Consequently, we are left with justifying the compactness condition \eqref{bit:p:3}. Since
 the first equation of \eqref{eq:linStDC} can be rewritten as 
 \begin{equation}\label{bit:p:10}
  \divergence b^k =\divergence \nabla w^k,
 \end{equation}
the condition \eqref{bit:p:3} is equivalent to the strong-$L^1$ precompactness of $\nabla w^k$. We will accomplish this in the following three steps.
 
 \subsubsection{Step 2. Solenoidal truncations}

Let us use Theorem  \ref{thm:liptrunc} to truncate solenoidally  $w^k$ at height $\lambda$, producing $w^{k, \lambda}$.  For the following {\em dual forcing} given by
  \[Q (\eta) := |\eta|^{q'-2} \eta,\]
  let us consider the following auxiliary linear Stokes problem
  \begin{equation}\label{eq:linStDC2}
  \begin{aligned}
    -\divergence (\epsilon ({z^{k,\lambda }})) + \nabla t^{k, \lambda} &=-\divergence Q(\nabla w^{k, \lambda})  &&\quad \text{ in } \Omega,\\
    \divergence z^{k, \lambda}&=0  && \quad  \text{ in } \Omega
  \end{aligned}
  \end{equation}
with null boundary-values. 
Boundedness of $Q(\nabla w^{k, \lambda})$ for a fixed $\lambda$ and Lemma \ref{lem:CZ} imply that for any finite $p$ one has
\[
\norm{z^{k,\lambda }}_{W^{1, p}_{0, div} }+ \norm{t^{k,\lambda }}_{L^{ p} }  \le C (\lambda)
\]
 and the regularity is inherited by the limiting equation with respect to $k \to \infty$, that reads
   \begin{equation}\label{eq:linStDC2:l}
  \begin{aligned}
    -\divergence (\epsilon ({z^\lambda})) + \nabla t^\lambda &=-\divergence \, Q_\lambda  &&\quad \text{ in } \Omega,\\
    \divergence z^\lambda&=0 &&\quad \text{ in } \Omega
  \end{aligned}
  \end{equation}
  with null boundary-values. The above $Q_\lambda$ denotes the $L_\omega^{q}$ weak limit of $Q(\nabla w^{k, \lambda}) $ (since $Q(\nabla w^{k})$, hence $Q(\nabla w^{k, \lambda}) $  is $k$-uniformly bounded in $L_\omega^{q}$). 
  
 For a non-relabeled subsequence of  $Q_\lambda$, let us immediately denote its $L_\omega^{q}$ weak limit by $Q_0$.

 \subsubsection{Step 3. A non-weighted weak-$L^1$ limit for truncations}

Our  aim in this step is to show, for a fixed $\lambda$ (possibly, again on a non-relabeled subsequence) that for $k \to \infty$
\begin{equation}\label{Minty2:1}
 Q(\nabla w^{k, \lambda})    \cdot \nabla w^{k}  \rightharpoonup Q_\lambda
  \cdot  \nabla w \qquad \textrm{weakly in } L^1(\Omega).
\end{equation}
 Due to  \eqref{bit:p:1} and boundedness of $\nabla w^{k,\lambda}$, we see that $ Q(\nabla w^{k, \lambda})    \cdot \nabla w^{k} $ is $k$-uniformly $L_\omega^{q'} \subset L^{1+ \delta} $ integrable, hence equiintegrable. Consequently, it possesses a weakly-$L^1$ converging subsequence. Now, to identify it with $Q_\lambda
  \cdot  \nabla w$, it suffices to show that for all $\eta \in \mathcal{D}(\Omega)$ we have
\begin{equation}\label{Minty2:2:p}
\lim_{k\to \infty} \int_{\Omega}  Q(\nabla w^{k, \lambda})    \cdot \nabla w^{k}  \;  \eta = \int_{\Omega}  Q_\lambda
  \cdot  \nabla w \; \eta.
\end{equation}
Let us write
\[
\int_{\Omega}  Q(\nabla w^{k, \lambda})    \cdot \nabla w^{k}  \eta  = \int_{\Omega}  (Q(\nabla w^{k, \lambda}) -  \epsilon (z^{k, \lambda})  )   \cdot \nabla w^{k}  \eta +  \int_{\Omega}   \epsilon (z^{k, \lambda}) \cdot \nabla w^{k}  \eta =: I^{k,\lambda} + II^{k,\lambda} 
\]
One has
\[
\begin{aligned}
 I^{k,\lambda}  = &\int_{\Omega} \left(Q(\nabla w^{k, \lambda}) - \epsilon (z^{k, \lambda})  \right) \cdot \nabla (w^{k} \eta) \dx  - \int_{\Omega}  \left(Q(\nabla w^{k, \lambda}) - \epsilon (z^{k, \lambda})  \right) \cdot   \left( w^{k} \otimes \nabla \eta \right)  \dx \\
=& \int_{\Omega} t^{k, \lambda} \divv (w^{k} \eta) \dx  - \int_{\Omega}  \left(Q(\nabla w^{k, \lambda}) - \epsilon (z^{k, \lambda})  \right) \cdot   \left( w^{k} \otimes \nabla \eta \right)  \dx \\
=& \int_{\Omega} t^{k, \lambda}  w^{k} \nabla \eta \dx  - \int_{\Omega}  \left(Q(\nabla w^{k, \lambda}) - \epsilon (z^{k, \lambda})  \right) \cdot   \left( w^{k} \otimes \nabla \eta \right)  \dx
\end{aligned}
\]
where for the second equality above we used the equation \eqref{eq:linStDC2} and for the last one - solenoidality of $w^{k}$. We have obtained formulas with  a coupling of $w^{k}$, strong-converging  in $L_\omega^{q'} \subset L^{1+ \delta}$   and the remainders weak converging in $L^p$ with any finite $p$. Hence we can pass to the limit and recover it by reverse equalities as follows
\[
\begin{aligned}
\lim_{k \to \infty}  I^{k,\lambda}   = & \int_{\Omega} t^{ \lambda}  w \nabla \eta \dx  - \int_{\Omega}  \left(Q_\lambda - \epsilon (z)  \right) \cdot   \left( w \otimes \nabla \eta \right)  \dx \\
= & \int_{\Omega} t^{ \lambda}  \divv (w^{k} \eta)  \dx  - \int_{\Omega}  \left(Q_\lambda - \epsilon (z)  \right) \cdot   \left( w \otimes \nabla \eta \right)  \dx \\
=& \int_{\Omega}  (Q_\lambda -  \epsilon (z^{ \lambda})  )   \cdot \nabla w \;  \eta.
\end{aligned}
\]

Function $w^{k} \eta$ with the Bogovskii correction\footnote{Compare Bogovskii~\cite{Bog79,Bog80} and Diening, R{\r u}{\v z}i{\v c}ka \& Schumacher  \cite{DieRS10}}  is admissible in \eqref{eq:linStDC2}. Therefore we can write  for $II^{k,\lambda}$
\[
\begin{aligned}
II^{k,\lambda} =&  \int_{\Omega}  \epsilon ( w^{k}) \cdot  \nabla (z^{k, \lambda} \eta) dx - \int_{\Omega}  \epsilon ( w^k) \cdot (z^{k, \lambda}\otimes \nabla \eta) dx \\
=& \int_{\Omega} \epsilon ( w^{k}) \cdot \nabla (z^{k, \lambda}  \eta - \Bog (z^{k, \lambda} \otimes \nabla \eta))  dx+ \int_{\Omega} \epsilon ( w^{k})  \cdot \nabla \left( \Bog (z^{k, \lambda} \otimes \nabla \eta )\right)  dx \\
&  - \int_{\Omega}   \nabla w^k \cdot (z^{k, \lambda} \otimes \nabla \eta) dx  \\
=& \int_{\Omega} s^k  \cdot \nabla (z^{k, \lambda}  \eta - \Bog (z^{k, \lambda} \otimes \nabla \eta))  dx+ \int_{\Omega} \nabla w^k  \cdot \nabla \left( \Bog (z^{k, \lambda} \otimes \nabla \eta )\right)  dx \\
&  - \int_{\Omega}   \nabla w^k \cdot (z^{k, \lambda} \otimes \nabla \eta) dx,
\end{aligned}
\]
where for the second equality above we used, this time, the equation \eqref{eq:linStDC}.
We use our assumption \eqref{bit5} to pass to the limit in the first term above. For the last two terms, we invoke continuity of Bogovskii operator in $L^p$ spaces to pass to the respective limits, thanks to the strong-weak coupling. Using for the limit \eqref{eq:linStDC} to reverse, we see that
\[
\lim_{k \to \infty}  II^{k,\lambda}   =  \int_{\Omega}   \epsilon (z^{ \lambda})   \cdot \nabla w \;  \eta.
\]
Putting together limits for $ I^{k,\lambda}$  and $ II^{k,\lambda}$, we obtain \eqref{Minty2:2:p}, thus \eqref{Minty2:1}. 

 \subsubsection{Step 4. A weighted weak-$L^1$ biting limit}
 Our goal here is to show that $Q (\nabla w^k) \cdot\nabla {w^k} \omega$ tends to $Q_0 \cdot  \nabla w  \; \omega$ weakly in $L^1(\Omega)$; in fact, we will have to decrease $\Omega$ slightly. Recall that \eqref{Minty2:1} does not involve a weight $\omega$. Therefore, we decompose an arbitrary $\omega \in A_{q'}$ as follows
\[
\omega = \frac{\omega}{1 + \delta \omega} +  \frac{\delta \omega^2}{1 + \delta \omega},
\]
with the former summand bounded for any $\delta >0$. 
Let us write
\begin{equation}\label{eq:s43:1:p}
 \begin{aligned}
& Q (\nabla w^k) \cdot\nabla {w^k} \omega - Q_\lambda
  \cdot  \nabla w  \; \omega \\
& =   \left( Q(\nabla w^{k, \lambda})    \cdot \nabla w^{k}  - Q_\lambda
  \cdot  \nabla w   \right) \omega  +  (Q(\nabla w^{k})   - Q(\nabla w^{k, \lambda}) )    \cdot \nabla w^{k} \; \omega \\
  & = \left( Q(\nabla w^{k, \lambda})    \cdot \nabla w^{k}  - Q_\lambda
  \cdot  \nabla w \right)  \frac{ \omega }{1 + \delta \omega} +  \left( Q(\nabla w^{k, \lambda})    \cdot \nabla w^{k}  - Q_\lambda
  \cdot  \nabla w \right)  \frac{\delta \omega^2 \id_{\{\omega \le \lambda\}} }{1 + \delta \omega}   \\
&   + \left( Q(\nabla w^{k, \lambda})    \cdot \nabla w^{k}  - Q_\lambda
  \cdot  \nabla w \right)  \frac{\delta  \omega^2  \id_{\{\omega > \lambda\}}  }{1 + \delta \omega}  +  (Q(\nabla w^{k})   - Q(\nabla w^{k, \lambda}) )    \cdot \nabla w^{k} \; \omega \\
  & =:  III_\delta^{k,\lambda}  +IV_\delta^{k,\lambda} +V_\delta^{k,\lambda} +VI^{k,\lambda}.
 \end{aligned}
 \end{equation}
 We will deal with $III_\delta^{k,\lambda}$ and $IV_\delta^{k,\lambda}$ directly via \eqref{Minty2:1}. Indeed, \eqref{Minty2:1} extends automatically to its weighted version, as long as the involved weight is  bounded. Therefore, as for fixed $\lambda, \delta$ the respective weights are bounded,  we have for an arbitrary $\psi \in L^\infty (\Omega)$
\begin{equation}\label{eq:s43:1:3}
 \lim_{k \to \infty} \int
  III_\delta^{k,\lambda}  \psi = 0, \qquad \lim_{k \to \infty} \int
  IV_\delta^{k,\lambda}  \psi = 0
 \end{equation}
 
In relation to   $V_\delta^{k,\lambda}$ we write, using the H\"older inequality
\begin{equation}\label{eq:s43:1:4}
\begin{aligned}
& \int V_\delta^{k,\lambda} \psi \le  \norm{\psi}_\infty  \int  \left( |Q(\nabla w^{k, \lambda}) | |\nabla w^{k}| + |Q_\lambda| |\nabla w| \right)   \frac{\delta \omega^2 \id_{\{\omega > \lambda\}} }{1 + \delta \omega}  \\
&  \le \norm{\psi}_\infty \norm{Q(\nabla w^{k, \lambda}) }_{L^q_\frac{\delta \omega^2  \id_{\{\omega > \lambda\}} }{1 + \delta \omega} } \norm{\nabla w^{k} }_{L^{q'}_\frac{\delta \omega^2 \id_{\{\omega > \lambda\}}  }{1 + \delta \omega} } + \norm{ \psi}_\infty \int  |Q_\lambda| |\nabla w|    \frac{\delta \omega^2 }{1 + \delta \omega} \\
&  \le \norm{\psi}_\infty  \norm{\nabla w^{k} }^2_{L^{q'}_{\omega \id_{\{\omega > \lambda\}} } } + \norm{ \psi}_\infty  \int  |Q_\lambda| |\nabla w|    \frac{\delta \omega^2}{1 + \delta \omega},
  \end{aligned}
 \end{equation}
 where for the second inequality we used growth of $Q$, \eqref{itm:weight2} and $  \frac{\delta \omega^2}{1 + \delta \omega} \le \omega$ almost everywhere.

Let us imply the Biting Lemma~\ref{thm:blem} on the sequence $\abs{\nabla {w^k}}^{q'}\omega$, compare \eqref{bit:p:1}. Consequently, there is a sequence $\Omega_j$  such that $|\Omega \setminus \Omega_j|\to 0$  and for any $K\subset \Omega_j$ holds
 \begin{equation}\label{smallunif}
\int_K \abs{\nabla {w^k}}^{q'}\omega \leq \epsilon.
\end{equation}
$k$-uniformly, as long as $\abs{K}\leq \delta_{\varepsilon, j}$.  The Chebyshev inequality for $\omega$, integrable by definition, indicates, that the role of $K$ may play $\set{\omega >\lambda}$ for sufficiently large $\lambda$, as long as we restrict ourselves to $\Omega_j$ in \eqref{eq:s43:1:3} and \eqref{eq:s43:1:4}. 
Indeed, in tandem with the above application of the biting Lemma, for every $j$ and $\epsilon$ there exists $\lambda_j^\epsilon$, such that
 \begin{align}
 \label{eq:wL12:p}
\int_{\set{ w>\lambda}\cap \Omega_j}\abs{\nabla {v^k}}^{q'}\omega \leq \epsilon\quad \text{ for every }\quad \lambda\geq \lambda_j^\epsilon.
 \end{align}
Consequently, a restriction to $\Omega_j$ does not change  \eqref{eq:s43:1:3} and allows to write via use of \eqref{eq:wL12:p} in \eqref{eq:s43:1:4} that for any $\epsilon$ and each $ \lambda\geq \lambda_j^\epsilon$
\begin{equation}\label{eq:s43:1:4'}
 \int_{\Omega_j} V_\delta^{k,\lambda} \psi  \le C\epsilon+ C (\norm{ \psi}_\infty)   \int_{\Omega_j}   |Q_\lambda| |\nabla w|    \frac{\delta \omega^2}{1 + \delta \omega},
 \end{equation}
Since  $|Q_\lambda| |\nabla w|   \frac{\delta \omega^2}{1 + \delta \omega} \le |Q_\lambda| |\nabla w| \omega$ with the latter integrable via the H\"older inequality, the Lebesgue dominated convergence used for the last summand of \eqref{eq:s43:1:4'} implies altogether
\begin{equation}\label{eq:s43:1:4''}
\limsup_{\delta \to \infty} \limsup_{k \to \infty} \int_{\Omega_j} V_\delta^{k,\lambda} \psi  \le C\epsilon+ 0 \end{equation}

Finally, let us focus on $VI^{k,\lambda}$ of \eqref{eq:s43:1:p}. We deal with it it using again biting lemma, together with the weak-$L^1$ estimate for the maximal function 
\[
 \abs{\set{M( \nabla {w^k})>\lambda}}\leq{ \frac{c\norm{\nabla {w^k}}_{L^1(\Omega)}}{\lambda}}\leq \frac{C}{\lambda},
 \]
which indicates, that here the role of the biting set $K$ may play $\set{M( \nabla {w^k})>\lambda}$ for sufficiently large $\lambda$. Indeed, in tandem with the above application of the biting Lemma, for every $j$ and $\epsilon$ there exists $\lambda_j^\epsilon$, such that
 \begin{align}
 \label{eq:wL12}
\int_{\set{M( \nabla {w^k})>\lambda}\cap \Omega_j}\abs{\nabla {w^k}}^{q'}\omega \leq \epsilon\quad \text{ for every }\quad \lambda\geq \lambda_j^\epsilon.
 \end{align}
Let us use Theorem~\ref{thm:liptrunc} to write
 \begin{align} \label{eq:wL12:2}
 \begin{aligned}
& \Bigabs{  \int_{\Omega_j} (Q(\nabla w^{k})   - Q(\nabla w^{k, \lambda}) )    \cdot \nabla w^{k} \;  \omega \; \psi } =   \Bigabs{   \int_{\set{M(\nabla ({w^k}))>\lambda}\cap\Omega_j}(Q(\nabla w^{k})   - Q(\nabla w^{k, \lambda}) )    \cdot \nabla w^{k} \;  \omega \; \psi  }   \\
& \le C \norm{\psi}_\infty \bigg( \int_{\Omega}\abs{\nabla w^{k,\lambda}}^{q'} \omega  + \abs{\nabla w^{k}}^{q'}\omega \bigg)^\frac{1}{q}  \bigg( \int_{\set{M(\nabla ({w^k}))>\lambda}\cap\Omega_j} \abs{\nabla ({w^k})}^{q'} \omega \bigg)^\frac{1}{q'}  , 
\end{aligned}
 \end{align}
where, for the inequality, we used growth of $Q$.
 
 Putting together \eqref{eq:wL12:2} and \eqref{eq:wL12} we see that for every $j$ and $\epsilon$ there exists $\lambda_j^\epsilon$ such that 
  \begin{equation} \label{eq:wL12:3}
 \Bigabs{  \int_{\Omega_j}VI^{k,\lambda} \psi  }  \leq C \norm{ \phi}_\infty \epsilon^\frac{1}{q'} \qquad \text{ for every }\quad \lambda\geq \lambda_j^\epsilon.
 \end{equation}
 
Altogether, integrating  \eqref{eq:s43:1:p} over $\Omega_j$, taking in its right-hand-side \[\limsup_{\lambda \to \infty} \; \limsup_{\delta \to \infty} \; \limsup_{k \to \infty}\] and using\eqref{eq:s43:1:3}, \eqref{eq:s43:1:4''} and \eqref{eq:wL12:3}, we see that for any $j$ it holds
\begin{equation}\label{Minty:p}
Q ( \nabla w^k)   \cdot \nabla {w^k} \omega \weakto Q_0 \cdot \nabla {w} \; \omega  \qquad \textrm{weakly in } L^1(\Omega_j).
\end{equation}

 \subsubsection{Step 4. Justifying the compactness condition \eqref{bit:p:3} via monotonicity } 
Finally, \eqref{Minty:p} together with radial unboundedness (coercivity) and strict monotonicity of $Q$ imply
\[
\nabla {w^k} \to \nabla w \qquad \text{ a.e. in } \Omega_j
\]
For more details on this step, compare for instance pp.52-53 in the book of Roub\'i\v{c}ek \cite{RoubicekBook}.
The diagonal argument gives us a subsequence such that 
\[
\nabla {w^k} \to \nabla w \qquad \text{ a.e. in } \Omega.
\]
This, together with the \eqref{bit:p:1} implies uniform integrability, hence via the Vitali's theorem $L^1$ strong sequential precompactness of $\nabla {w^k}$. 

The proof is complete.

%
%

\section{Proofs of main results}\label{sec:ex}
This section is dedicated to the proofs of  Theorem~\ref{th},  Theorem~\ref{th1} and Corollary \ref{cor1}. Theorem~\ref{th} is a special case of Theorem~\ref{th1}, so let us focus on the latter. The main ingredients of its proof are a priori estimates provided by Theorem~\ref{thm:a-priori}, limit identification by Theorem \ref{T5} and weighed considerations that allow to provide optimal regularity.

\subsection{Existence. Step 1. Approximate problems}\label{ssec:ap}
Recall that an arbitrary  $f\in L^q_\omega (\Omega)$ with  $\omega\in A_q$, $1<q<\infty$, is a force of the considered problem \eqref{eq:sysA}.
We have by \eqref{eq:lqprop}, that $f\in L^{s_0} (\Omega)$ for an $s_0 \in (1,2)$. Formula~\eqref{weight:s} with $\alpha = 2-s_0$ implies that $ (Mf)^{s_0-2} \in A_2$, hence also $\omega_0 := (1+ Mf)^{s_0-2}$ belongs to $A_2$.  Conseqently we have $f \in L^2_{\omega_0}(\Omega)$, compare \eqref{weight1}.

Let us  define $f^k:=f \chi_{\set{|f|<k}}$.
Then 
\begin{align}\label{cfn}
  f^k \to f \quad \textrm{ strongly in } L^2_{\omega_0} (\Omega) \cap
  L^{{s_0}} (\Omega) \cap L^q_{\omega}  (\Omega)
\end{align}
For our $f^k \in L^2  (\Omega)$ we can use the standard monotone operator theory to find $v^k \in W^{1,2}_{0} (\Omega)$ satisfying
\begin{equation}
  \int_{\Omega} \Scal(x,\varepsilon (v^k)) \cdot \nabla \phi  = \int_{\Omega}f^k
  \cdot \nabla \phi    \qquad \textrm{ for all } \phi\in W^{1,2}_{0, \divv}(\Omega). \label{wfn}
\end{equation}
It is equivalent to finding $(v^k, \pi^k) \in W^{1,2}_{0} (\Omega) \times \hL^{2}(\Omega) $ solving weakly \eqref{eq:sysA}. 
By Theorem~\ref{thm:a-priori} (used three times, for  $L^q_\omega (\Omega), L^{s_0}(\Omega)$ and for $L^2_{\omega_0} (\Omega)$), we find that uniformly in $k$
\begin{equation}\label{finaln}
\begin{aligned}
 \norm{\nabla v^k}_{L^q_{\omega} (\Omega)} +  \norm{\pi^k}_{L^q_{\omega} (\Omega)}  \leq C (1+ \norm{f^k}_{L^q_\omega (\Omega)} ) \leq C (1+ \norm{f}_{L^q_\omega (\Omega)} ), \\
 \norm{\nabla {v^k}}_{L^2_{\omega_0} (\Omega)} +  \norm{\pi^k}_{L^2_{\omega_0} (\Omega)} +   \norm{\nabla {v^k}}_{L^{s_0} (\Omega)}  + \norm{\pi^k}_{L^{s_0} (\Omega)} \leq C (1 + \norm{f^k}_{L^2_{\omega_0} (\Omega)}) \leq C_f.
\end{aligned}
\end{equation}

\subsection{Existence. Step 2. Limit passage}
Using the estimate~\eqref{finaln}, the reflexivity of the corresponding spaces, the unique identification of the limit~$v$ in $W^{1,1}(\Omega)$ and the growth of Assumption \ref{ass:A},  we obtain for a (non-relabeled) subsequence
\begin{align}
  \label{conn-a}
  {v^k} &\rightharpoonup v &&\textrm{weakly in } W^{1,s_0}_0(\Omega),
  \\
  \label{conn-b}
(\nabla {v^k}, \pi^k)  &\rightharpoonup (\nabla v, \pi) &&\textrm{weakly in }
  L^2_{\omega_0} (\Omega) \cap L^{s_0}(\Omega) \cap   L^q_{\omega} (\Omega)  ,
  \\
  \Scal (x,\varepsilon ({v^k})) &\rightharpoonup \Scal_0 &&\textrm{weakly in }
  L^2_{\omega_0} (\Omega) \cap L^{s_0}(\Omega) \cap   L^q_{\omega} (\Omega)   \label{con2}.
\end{align}

Hence the  lower weak semicontinuity implies via \eqref{finaln}
\begin{equation}\label{eq:final}
\begin{aligned}
 \norm{\nabla v}_{L^q_{\omega} (\Omega)} +  \norm{\pi}_{L^q_{\omega} (\Omega)}  &\leq C (1+ \norm{f}_{L^q_\omega (\Omega)} ) \\
  \norm{\nabla v}_{L^{s_0} (\Omega)}  + \norm{\nabla v}_{L^2_{\omega_0} (\Omega)} &\leq C_f.
\end{aligned}
\end{equation}
Convergences \eqref{con2} and \eqref{cfn} used in \eqref{wfn} imply
\begin{align}
  \label{eq:limit}
  \int_\Omega \Scal_0 \cdot\nabla \phi=\int_\Omega f\cdot\nabla \phi \qquad\text{ for all } \phi \in W^{1,{\infty}}_{0, \divv}(\Omega).
\end{align}
Hence, to complete the proof of Theorem~\ref{th1}, it remains to identify the limit properly, \emph{i.e.} to show 
\begin{align}
\Scal_0 (x) &=\Scal (x,\nabla v(x)) \qquad\textrm{in  } \Omega,\label{show22}
\end{align}
because then the optimal regularity will be given by the first line of \eqref{eq:final}.
\subsection{Existence. Step 3. Limit identification}
This is the central part of our proof. Its crucial part will follow from the solenoidal, weighted, biting div-curl lemma, i.e. Theorem \ref{T5}.  

Recall that the classical way of identifying the limit in nonlinear problems, namely use of monotonicity and dealing with the most nonlinear part via the equation, is impossible in our very weak setting, since one cannot use $u$ as a test function in \eqref{eq:limit}. 

Observe also that taking the weighed limits is  crucial to end up with optimal regularity related to $f$ (recall the our weight $\omega_0$ is related to $Mf$).

Let us use Theorem \ref{T5} with the following choices
\[
q=q'=2, \quad \omega = \omega_0, \quad  a^k = \nabla v^k, \quad s^k = \Scal (\cdot,\varepsilon ({v^k})).
\]
The uniform boundedness assumption \eqref{bit3} is satisfied thanks to \eqref{finaln}. The compactness assumption  \eqref{bit4} holds thanks to the weak formulation \eqref{wfn} with $c^k$ as the test function. Finally, the compensation assumptions \eqref{bit5}, \eqref{bit6} hold automatically, since our $ a^k $ is a gradient of a solenoidal function.

Thesis of Theorem \ref{T5} provides thence, for a non-relabelled subsequence and a non-decreasing sequence of measurable subsets
  $\Omega_j\subset\Omega$ with $|\Omega \setminus \Omega_j|\to 0$ as $j\to
  \infty$, that
\begin{equation}\label{Minty}
 \Scal (\cdot,\varepsilon ({v^k}))\cdot\nabla {v^k} \omega_0 \weakto  \Scal_0
  \cdot  \nabla v \, \omega_0 \qquad \textrm{weakly in } L^1(\Omega_j).
\end{equation}

The last needed step: from \eqref{Minty} to \eqref{show22}, will be performed via monotonicity. Let us take any $B\in L^{2}_{\omega_0}(\Omega)$. Using \eqref{Minty},~\eqref{conn-b} and~\eqref{con2},  we get
\begin{equation}\label{Minty2}
  (\Scal(x,\varepsilon ({v^k}))-\Scal(x,B)) \cdot (\nabla {v^k}-B) \,\omega_0 \rightharpoonup
  ({\Scal_0}-\Scal(x,B)) \cdot (\nabla u-B)\, \omega_0 \quad
  \textrm{weakly in }  L^1(\Omega_j).
\end{equation}
Monotonicity of $\Scal$ implies that the limit is signed as well, thus
\begin{equation}\label{Minty3}
  \int_{\Omega_j} ({\Scal_0}-\Scal(x,B)) \cdot (\nabla v-B)\, \omega_0 \dx \ge 0
\end{equation}
for any $  j\in \mathbb{N}$. Consequently
\begin{equation*}
\infty> \int_{\Omega}(\Scal_0-\Scal(x,B)) \cdot (\nabla v-B)\, \omega_0  \ge \int_{\Omega\setminus \Omega_j}(\Scal_0-\Scal(x,B)) \cdot (\nabla v-B)\, \omega_0 
\end{equation*}
Observe that the integrals above are well defined due
to~\eqref{conn-b}, \eqref{con2} and the assumed growth of $\Scal$. Therefore, recalling that $|\Omega
\setminus \Omega_j|\to 0$ as $j\to \infty$, we let $j \to \infty$ and obtain
\begin{equation*}
\infty> \int_{\Omega}(\Scal_0-\Scal(x,B)) \cdot (\nabla v-B)\, \omega_0 \dx \ge 0 \qquad \textrm{for all } B\in L^2_{\omega_0}(\Omega).
\end{equation*}
Choosing $B:=\nabla u -\varepsilon G$ with an  arbitrary $G\in L^{\infty}(\Omega)$, we get
\begin{equation*}
\infty>  \int_{\Omega}(\Scal_0-\Scal(x,\nabla v - \varepsilon G)) \cdot G\, \omega_0 \dx \ge 0 
\end{equation*}
Finally, using the Lebesgue dominated convergence theorem, Assumption \ref{ass:A} (growth and continuity), we let $\varepsilon \to 0_+$ to deduce
\begin{equation*}
\int_{\Omega}(\Scal_0-\Scal(x,\nabla v)) \cdot G\, \omega_0 \dx \ge 0.
\end{equation*}
Choosing 
$$
G:=-\frac{\Scal_0-\Scal(x,\nabla v)}{1+\abs{\Scal_0-\Scal(x,\nabla v)}}.
$$
and utilizing that $\omega_0$ is strictly positive almost everywhere in $\Omega$, we arrive at validity of \eqref{show22} a.e. in $\Omega$. Consequently
\begin{align}
  \label{eq:existence}
  \int_\Omega \Scal(x,\nabla v) \cdot\nabla \phi=\int_\Omega f\cdot\nabla \phi \qquad\text{ for all } \phi \in W^{1,\infty}_{0, \divv}(\Omega).
\end{align}
with estimate  \eqref{eq:final}.

We have ended the proof of the existence part of Theorem \ref{th1}. The estimate \eqref{eq:th1} is given by Theorem \ref{thm:a-priori}. Hence, to conclude proof of Theorem \ref{th1},  we are left with showing its uniqueness statement.

\subsection{Uniqueness}
Recall that now the tensor $\Scal$ satisfies additionally Assumption \ref{ass:B}.
A difference between two solutions $u_1$ and $u_2$ to \eqref{eq:sysA} with the same force $f \in L^q_\omega (\Omega)$ satisfies
\begin{equation}
\label{un1}
\int_{\Omega} \big(\Scal(x,\varepsilon (v_1) )-\Scal(x,\varepsilon  (v_2) ) \big)\cdot \nabla \varphi \dx = 0, 
\end{equation}
with the admissible class of $\varphi$ dictated by the optimal  $L^q_\omega$-regularity of $v_1, v_2$, see \eqref{eq:th1}.
Hence, if we could have chosen $\varphi = v_1 - v_2$, the assumed strict monotonicity would imply  $v_1 = v_2$. Therefore in the case $L^{q}_\omega(\Omega)\subset L^2(\Omega)$ the proof is finished. But generally, we find that $f\in L^{s_0}(\Omega)$, merely for some $s_0 \in (1,2]$, compare Section \ref{ssec:ap}. Such $L^{s_0}$-regularity seems insufficient, since possibly $s_0<2$. {Nevertheless, we will be able to show that $\nabla(v_1 - v_2) \in L^2 (\Omega)$  via the weighted estimates and conclude the uniqueness using this extra regularity for the difference.

 To begin with, let us recall} that $f\in L^2_{\omega_0}(\Omega)$ for $\omega_0 = (1+Mf)^{s_0-2}$ and therefore also $\nabla v_1, \nabla v_2\in L^2_{\omega_0}(\Omega)$.
Let us rewrite the identity \eqref{un1} into the form
\begin{equation}
\label{un2}
\int_{\Omega} (\varepsilon (v_1) - \varepsilon (v_2))  \nabla \varphi 
= \mu^{-1} \int_{\Omega} \big(\mu \;  \varepsilon (v_1) - \Scal \big(x, \varepsilon (v_1) \big) - \big(\mu \; \varepsilon v_2-\Scal (x, \varepsilon (v_2))\big)  \nabla \varphi ,
\end{equation}
which is valid for all $\varphi\in W^{1,\infty}_{0, \divv}(\Omega)$.

 Let $w^j:=\min\set{1, (j\omega_0)}$ and observe that $\nabla v \in L^2_{\omega_j}(\Omega)$ for a fixed $j$, since $\nabla v \in L^2_{\omega_0}(\Omega)$ in view of the previous subsection. Moreover, $A_p (\omega_j) \le \max (1,A_p (\omega_0))$ in view of definition \ref{defAp2}. Consequently, we can use the linear maximal regularity Lemma~\ref{lem:CZ} to obtain
\begin{equation}
\label{un3}
\int_{\Omega}|\varepsilon (v_1) - \varepsilon (v_2) |^2 \omega^{{j}} \le C \mu^{-1} \int_{\Omega} \left| \mu \; \varepsilon u_1  - \Scal(x,\varepsilon (v_1) ) -\big( \mu \; \varepsilon (v_2) -\Scal(x, \varepsilon (v_2) ) \big)\right|^2 \omega^j 
\end{equation}
with finite r.h.s. and ${j}$-independent  $C$ of, the latter due to $A_p (\omega_j) \le \max (1,A_p (\omega_0))$.
Next, using the estimate \eqref{algebra} of Lemma \ref{L:algebra}  in \eqref{un3}, we find that for any $\delta>0$
\begin{equation}
\label{un4}
\begin{split}
\int_{\Omega}|\varepsilon (v_1) -\varepsilon (v_2) |^2 \omega^j \le C  \mu^{-1} \delta \int_{\Omega}| \varepsilon (v_1) - \varepsilon (v_2) |^2\omega^j + C(\delta)\omega^j.
\end{split}
\end{equation}
Thus, setting $\delta:=\frac{\mu}{2C}$ yields
\begin{equation}
\label{un5}
\begin{split}
\int_{\Omega}| \varepsilon (v_1 - v_2)|^2 \omega^j \le C(\delta) \int_{\Omega} \omega^j \le C,
\end{split}
\end{equation}
where the last inequality follows from the fact that $\Omega$ is bounded and $\omega^j \le 1$. Hence, letting $j\to \infty$ in \eqref{un5}, together with $\omega^j \nearrow 1$ (which follows from the fact that $\omega_0>0$  almost everywhere) and the monotone convergence theorem implies
$$
\int_{\Omega}| \varepsilon (v_1 - v_2)|^2   \le C.
$$
Hence, via the Korn inequality, we see that  $v_1-v_2 \in W^{1,2}_0(\Omega)$. Consequently, using  growth of $\Scal$ given by Assumption \ref{ass:A},  we have that
\[
\int_{\Omega}|\Scal (x, \varepsilon (v_1))-\Scal(x, \varepsilon (v_2))|^2 \le C.
\]
Therefore, \eqref{un1} holds for all $\varphi\in W^{1,2}_{0, \divv} (\Omega)$, including $\varphi:=v_1-v_2$. The strict monotonicity finishes the proof of the uniqueness. The entire 
Theorem~\ref{th1} is proved.
\qed

\subsection{Proof of {Corollary~\ref{cor1}}} 
It follows the lines of proof of Theorem~\ref{th1}, with rather straightforward modifications related to involved inhomogeneities. More precisely, in Steps 1 and 2 of proof of Theorem~\ref{th1} we use now the inhomogenous estimate of Theorem~\ref{thm:a-priori}. It implies weak convergence in the respective spaces. To identify the limit (reconstruct the stress tensor) along Step 3, its arguments can be shown for $v-\gamma^{-1}(g)-\Bog(v-\gamma^{-1}(g))$, because the  appearing extra terms are converging due to the weak-strong coupling. Substeps 4.3, 4.4 can be then adapted immediately. The proof of the uniqueness is line by line the same.

\bibliographystyle{abbrv} 

\bibliography{burczak}

\def\polhk#1{\setbox0=\hbox{#1}{\ooalign{\hidewidth
  \lower1.5ex\hbox{`}\hidewidth\crcr\unhbox0}}}
  \def\ocirc#1{\ifmmode\setbox0=\hbox{$#1$}\dimen0=\ht0 \advance\dimen0
  by1pt\rlap{\hbox to\wd0{\hss\raise\dimen0
  \hbox{\hskip.2em$\scriptscriptstyle\circ$}\hss}}#1\else {\accent"17 #1}\fi}
  \def\ocirc#1{\ifmmode\setbox0=\hbox{$#1$}\dimen0=\ht0 \advance\dimen0
  by1pt\rlap{\hbox to\wd0{\hss\raise\dimen0
  \hbox{\hskip.2em$\scriptscriptstyle\circ$}\hss}}#1\else {\accent"17 #1}\fi}
  \def\ocirc#1{\ifmmode\setbox0=\hbox{$#1$}\dimen0=\ht0 \advance\dimen0
  by1pt\rlap{\hbox to\wd0{\hss\raise\dimen0
  \hbox{\hskip.2em$\scriptscriptstyle\circ$}\hss}}#1\else {\accent"17 #1}\fi}
  \def\ocirc#1{\ifmmode\setbox0=\hbox{$#1$}\dimen0=\ht0 \advance\dimen0
  by1pt\rlap{\hbox to\wd0{\hss\raise\dimen0
  \hbox{\hskip.2em$\scriptscriptstyle\circ$}\hss}}#1\else {\accent"17 #1}\fi}
  \def\cprime{$'$}
\begin{thebibliography}{10}

\bibitem{AceF88}
E.~Acerbi and N.~Fusco.
\newblock An approximation lemma for {$W\sp {1,p}$} functions.
\newblock In {\em Material instabilities in continuum mechanics (Edinburgh,
  1985--1986)}, Oxford Sci. Publ., pages 1--5. Oxford Univ. Press, New York,
  1988.

\bibitem{BallMurat:89}
J.~M. Ball and F.~Murat.
\newblock Remarks on {C}hacon's biting lemma.
\newblock {\em Proc. Amer. Math. Soc.}, 107(3):655--663, 1989.

\bibitem{Bog79}
M.~E. Bogovski{\u\i}.
\newblock Solution of the first boundary value problem for an equation of
  continuity of an incompressible medium.
\newblock {\em Dokl. Akad. Nauk SSSR}, 248(5):1037--1040, 1979.

\bibitem{Bog80}
M.~E. Bogovski{\u\i}.
\newblock Solutions of some problems of vector analysis, associated with the
  operators ${\rm div}$\ and ${\rm grad}$.
\newblock In {\em Theory of cubature formulas and the application of functional
  analysis to problems of mathematical physics (Russian)}, pages 5--40, 149.
  Akad. Nauk SSSR Sibirsk. Otdel. Inst. Mat., Novosibirsk, 1980.

\bibitem{BorMiy90}
W.~Borchers and T.~Miyakawa.
\newblock Algebraic ${L}^2$ decay for {N}avier-{S}tokes flows in exterior
  domains.
\newblock {\em Acta Math.}, 165:189--227, 1990.

\bibitem{BorMiy92notes}
W.~Borchers and T.~Miyakawa.
\newblock On some coercive estimates for the {S}tokes problem in unbounded
  domains.
\newblock {\em The Navier-Stokes equations II - theory and numerical methods
  (Oberwolfach, 1991), Lecture Notes in Math.}, 1530:71--84, 1992.

\bibitem{BreDieFuc12}
D.~Breit, L.~Diening, and M.~Fuchs.
\newblock Solenoidal {L}ipschitz truncation and applications in fluid
  mechanics.
\newblock {\em J. Differential Equations}, 253(6):1910--1942, 2012.

\bibitem{BreDieSch13}
D.~Breit, L.~Diening, and S.~Schwarzacher.
\newblock Solenoidal {L}ipschitz {T}runcation for {P}arabolic {PDE}s.
\newblock {\em Math. Models Methods Appl. Sci.}, 53(14):2671--2700, 2013.

\bibitem{BulDieSch15}
M.~Bul\'{\i}\v{c}ek, L.~Diening, and S.~Schwarzacher.
\newblock Existence, uniqueness and optimal regularity results for very weak
  solutions to nonlinear elliptic systems.
\newblock {\em accepted in Analysis \& PDE, Preprint {MORE} 2015/03}, 2015.

\bibitem{BulMalGwiaSwi09}
M.~Bul\'{\i}\v{c}ek, J.~M{\'a}lek, P.~Gwiazda, and A.~\'Swierczewska-Gwiazda.
\newblock On steady flows of incompressible fluids with implicit power-law-like
  rheology.
\newblock {\em Adv. Calc. Var.}, 2(2):109--136, 2009.

\bibitem{BulSch16}
M.~Bul\'{\i}\v{c}ek and S.~Schwarzacher.
\newblock Existence of very weak solutions to elliptic systems of p-laplacian
  type.
\newblock {\em to appear in Calc. Var. \& PDE, Preprint {MORE} 2016/06}, 2016.

\bibitem{Cat61}
L.~Cattabriga.
\newblock Su un problema al contorno relativo al sistema di equazioni di
  {S}tokes.
\newblock {\em Rend. Sem. Mat. Univ. Padova}, 31:308--340, 1961.

\bibitem{CruMarPerBook}
D.~Cruz-Uribe, J.~M. Martell, and C.~P{\'e}rez.
\newblock {\em Weights, extrapolation and the theory of Rub{\'{\i}}o de
  Francia}.
\newblock BirkhŠuser/Springer Basel AG, Basel, 2011.
\newblock Operator Theory: Advances and Applications, 215.

\bibitem{DieKap12}
L.~Diening and P.~Kaplick\'y.
\newblock $l^q$ theory for a generalized stokes system.
\newblock {\em Manuscripta Mathematica}, 141:333--361, 2013.

\bibitem{DieKapSch13}
L.~Diening, P.~Kaplick{\'y}, and S.~Schwarzacher.
\newblock Campanato estimates for the generalized stokes system.
\newblock {\em Annali di Matematica Pura ed Applicata}, 193:1779--1794, 2014.

\bibitem{DieMalSte08}
L.~Diening, J.~M{\'a}lek, and M.~Steinhauer.
\newblock On lipschitz truncations of sobolev functions (with variable
  exponent) and their selected applications.
\newblock {\em {ESAIM} Control Optim. Calc. Var.}, 14(2):211--232, 2008.

\bibitem{DieRS10}
L.~Diening, M.~R{\r u}{\v z}i{\v c}ka, and K.~Schumacher.
\newblock A decomposition technique for {J}ohn domains.
\newblock {\em Ann. Acad. Sci. Fenn. Ser. A. I. Math.}, 35:87--114, 2010.

\bibitem{FreMalSte00}
J.~Frehse, J.~M{\'a}lek, and M.~Steinhauer.
\newblock On existence results for fluids with shear dependent
  viscosity---unsteady flows.
\newblock In {\em Partial differential equations (Praha, 1998)}, volume 406 of
  {\em Chapman \& Hall/CRC Res. Notes Math.}, pages 121--129. Chapman \&
  Hall/CRC, Boca Raton, FL, 2000.

\bibitem{Fro03}
A.~Fr\"ohlich.
\newblock The {S}tokes operator in weighted ${L}^q$-spaces {I}: Weighted
  estimates for the {S}tokes resolvent problem in a half space.
\newblock {\em J. math. fluid mech.}, 5(5):166--199, 2003.

\bibitem{Fro07}
A.~Fr\"ohlich.
\newblock The {S}tokes operator in weighted ${L}^q$-spaces {II}: weighted
  resolvent estimates and maximal ${L}^p$-regularity.
\newblock {\em Math. Ann.}, 339(1):287--316, 2007.

\bibitem{Lad67}
O.~Ladyzhenskaya.
\newblock New equations for the description of motion of viscous incompressible
  fluids and solvability in the large of boundary value problems for them.
\newblock {\em Proc. Stek. Inst. Math.}, 102:95--118, 1967.

\bibitem{Lad69}
O.~A. Ladyzhenskaya.
\newblock {\em The mathematical theory of viscous incompressible flow}.
\newblock Second English edition, revised and enlarged. Translated from the
  Russian by Richard A. Silverman and John Chu. Mathematics and its
  Applications, Vol. 2. Gordon and Breach Science Publishers, New York, 1969.

\bibitem{Lio69}
J.-L. Lions.
\newblock {\em Quelques m\'ethodes de r\'esolution des probl\`emes aux limites
  non lin\'eaires}.
\newblock Dunod, 1969.

\bibitem{MalRaj95}
J.~M{\'a}lek, K.~Rajagopal, and M.~R{\r u}{\v z}i{\v c}ka.
\newblock {Existence and regularity of solutions and the stability of the rest
  state for fluids with shear dependent viscosity.}
\newblock {\em Math. Models Methods Appl. Sci.}, 5(6):789--812, 1995.

\bibitem{MalR05}
J.~M{\'a}lek and K.~R. Rajagopal.
\newblock Mathematical issues concerning the {N}avier---{S}tokes equations and
  some of its generalizations.
\newblock In C.~Dafermos and E.~Feireisl, editors, {\em Evolutionary
  Equations}, volume~2 of {\em HANDBOOK OF DIFFERENTIAL EQUATIONS}, pages
  371--459. Elsevier B. V., 2005.

\bibitem{Mu72}
B.~Muckenhoupt.
\newblock Weighted norm inequalities for the {H}ardy maximal function.
\newblock {\em Trans. Amer. Math. Soc.}, 165:207--226, 1972.

\bibitem{RoubicekBook}
T.~Roub\'{\i}{\v c}ek.
\newblock {\em Nonlinear partial differential equations with applications.
  Second edition.}, volume 153 of {\em International Series of Numerical
  Mathematics}.
\newblock BirkhŠuser/Springer Basel AG, Basel, 2013.

\bibitem{Rub84}
J.~L. Rub{\'i}o~de Francia.
\newblock Factorization theory and {$A\sb{p}$} weights.
\newblock {\em Amer. J. Math.}, 106(3):533--547, 1984.

\bibitem{Saw83}
E.~Sawyer.
\newblock Norm inequalities relating singular integrals and the maximal
  function.
\newblock {\em Studia Math.}, 75(3):253--263, 1983.

\bibitem{Sch08}
K.~Schumacher.
\newblock Very weak solutions to the stationary {S}tokes and {S}tokes resolvent
  problem in weighted function spaces.
\newblock {\em Ann. Univ. Ferrara Sez. VII Sci. Mat.}, 54(1):123--144, 2008.

\bibitem{Ser64}
J.~Serrin.
\newblock Pathological solutions of elliptic differential equations.
\newblock {\em Ann. Scuola Norm. Sup. Pisa (3)}, 18:385--387, 1964.

\bibitem{sol93}
V.~Solonnikov.
\newblock {On a boundary value problem with discontinuous boundary conditions
  for Stokes and Navier-Stokes equations in the three-dimensional case
  (Russian)}.
\newblock {\em Algebra i Analiz}, 5:252 -- 270, 1993.

\bibitem{Ste93}
E.~M. Stein.
\newblock {\em Harmonic analysis: real-variable methods, orthogonality, and
  oscillatory integrals}.
\newblock Princeton University Press, Princeton, NJ, 1993.
\newblock With the assistance of Timothy S. Murphy, Monographs in Harmonic
  Analysis, III.

\bibitem{86:_real}
A.~Torchinsky.
\newblock {\em Real-variable methods in harmonic analysis}, volume 123 of {\em
  Pure and Applied Mathematics}.
\newblock Academic Press, Inc., Orlando, FL, 1986.

\bibitem{Tur00}
B.~O. Turesson.
\newblock {\em Nonlinear potential theory and weighted {S}obolev spaces},
  volume 1736 of {\em Lecture Notes in Mathematics}.
\newblock Springer-Verlag, Berlin, 2000.

\bibitem{SvYa02}
V.~\v{S}ver{\'{a}}k and X.~Yan.
\newblock Non-{L}ipschitz minimizers of smooth uniformly convex functionals.
\newblock {\em Proc. Natl. Acad. Sci. USA}, 99(24):15269--15276, 2002.

\end{thebibliography}
\end{document}